\documentclass[a4paper,12pt, reqno]{amsart}


\usepackage{amsmath, amsthm, amscd, amsfonts, amssymb, graphicx, color}
\usepackage[bookmarksnumbered, colorlinks, plainpages]{hyperref}

\usepackage{enumitem}
\usepackage{url}
\usepackage{multirow}
\usepackage{graphicx}
\usepackage{subfigure}
\usepackage[pass]{geometry}
\usepackage{cite}
\usepackage{cancel}
\usepackage{color}
\usepackage{pgf,tikz}
\usetikzlibrary{arrows}
\usepackage{lscape}
\tikzstyle{v} = [circle, draw, inner sep=2pt, minimum size=3pt, fill=black]
\usetikzlibrary{calc,shapes,decorations.pathreplacing,matrix}
\usetikzlibrary{patterns}

\tikzset{square matrix/.style={
    matrix of nodes,
    column sep=-\pgflinewidth, row sep=-\pgflinewidth,
    nodes={draw,
      minimum height=4.5pt,
      anchor=center,
      text width=4.5pt,
      align=center,
      inner sep=0pt
    },
  },
  square matrix/.default=1.2cm
}

\newtheorem{Theorem}{Theorem}[section]
\newtheorem{Definition}[Theorem]{Definition}

\newtheorem{Proposition}[Theorem]{Proposition}
\newtheorem{Corollary}[Theorem]{Corollary}
\newtheorem{Remark}[Theorem]{Remark}

\newtheorem{Problem}[Theorem]{Problem}
\newtheorem{Conjecture}[Theorem]{Conjecture}

\begin{document}

\title{Connected and outer-connected domination number of middle graphs}

\author[F. Kazemnejad]{Farshad Kazemnejad}
\address{Farshad Kazemnejad, Department of Mathematics, School of Sciences, Ilam University, 
P.O.Box 69315-516, Ilam, Iran.}
\email{kazemnejad.farshad@gmail.com}
\author[B. Pahlavsay]{Behnaz Pahlavsay}
\address{Behnaz Pahlavsay, Department of Mathematics, Hokkaido University, Kita 10, Nishi 8, Kita-Ku, Sapporo 060-0810, Japan.}
\email{pahlavsayb@gmail.com}
\author[E. Palezzato]{Elisa Palezzato}
\address{Elisa Palezzato, Department of Mathematics, Hokkaido University, Kita 10, Nishi 8, Kita-Ku, Sapporo 060-0810, Japan.}
\email{palezzato@math.sci.hokudai.ac.jp}
\author[M. Torielli]{Michele Torielli}
\address{Michele Torielli, Department of Mathematics, GI-CoRE GSB, Hokkaido University, Kita 10, Nishi 8, Kita-Ku, Sapporo 060-0810, Japan.}
\email{torielli@math.sci.hokudai.ac.jp}

\date{\today}

\begin{abstract}
 In this paper, we study the notions of connected domination number and of outer-connected domination number for middle graphs. Indeed, we obtain tight bounds for this number in terms of the order of the graph $M(G)$. We also compute the outer-connected domination number of some families of graphs such as star graphs, cycle graphs, wheel graphs, complete graphs, complete bipartite graphs and some operation on graphs, explicitly. Moreover, some Nordhaus-Gaddum-like relations are presented for the outer-connected domination number of middle graphs.
\\[0.2em]

\noindent
Keywords: Connected Domination number, Outer-Connected Domination number, Domination number, Middle graph, Nordhaus-Gaddum-like relation.
\\[0.2em]

\noindent 
\end{abstract}
\maketitle
\section{\bf Introduction}

Domination problems and its many generalizations have been intensively studied in graph theory since 1950, see for example \cite{HHS5}, \cite{HHS6}, \cite{HeYe13}, \cite{farsom},  \cite{3totdominrook} and \cite{dominLatin}. In this paper, we use standard notation for graphs and we assume that every graph is non-empty, finite, undirected and simple. We refer to \cite{bondy2008graph} as a general reference on the subject.



Given a simple graph $G$, a \emph{dominating set} of $G$ is a set $S\subseteq V(G)$
such that  $N_G[v]\cap
S\neq \emptyset$, for any vertex $v\in V(G)$, where $N_G[v]$ is the closed neighborhood of $v$. The \emph{domination number} of $G$ is the minimum cardinality of a dominating set of $G$ and it is denoted by $\gamma(G)$. 

An important subclass of the dominating sets, that is central to this paper, is the class of connected dominating sets introduced in \cite{cds}.

\begin{Definition}
 A dominating set $S$ of a graph $G$ is called \emph{connected dominating set} if the induced subgraph $G[S]$ is connected. The minimum cardinality taken over all connected dominating sets in $G$ is called the \emph{connected domination number} of $G$ and is denoted by $\gamma_{c}(G)$. Moreover, a connected dominating set of $G$ of cardinality $\gamma_{c}(G)$ is called a $\gamma_{c}$-set of $G$.
\end{Definition}

In \cite{ocds}, the authors, taking inspiration from the notion of connected dominating set, introduced the concept of outer-connected dominating set.
\begin{Definition}
A dominating set $S$ of a graph $G$ is called an \emph{outer-connected dominating set}  if the graph $G-S$ is connected. The minimum cardinality of an outer-connected dominating set of $G$ is called the \emph{outer-connected domination number} of $G$ and it is denoted by $\gamma{\tilde{}} _{c}(G)$.
\end{Definition}

Following our previous works \cite{DSFBME}, \cite{KPPT2021color} and \cite{KPPT2021total}, the aim of this paper is to study connected dominating sets and outer-connected dominating set of middle graphs.
The concept of middle graph of a graph was first introduced in \cite{HamYos} as an intersection graph.

\begin{Definition}
 The middle graph $M(G)$ of a graph $G$ is the graph whose vertex set is $V(G)\cup E(G)$, where two vertices $x, y$ in the vertex set of $M(G)$ are adjacent in $M(G)$ if one the following holds
 \begin{enumerate}
 \item $x, y\in E(G)$ and $x, y$ are adjacent in $G$;
 \item $x\in V (G)$, $y\in E(G)$, and $x, y$ are incident in $G$. 
 \end{enumerate}
 \end{Definition}
 
 Notice that, by definition, if $G$ is a graph of order $n$ and size $m$, then $M(G)$ is a graph of order $n+m$ and size $2m+|E(L(G))| $, where $L(G)$ is the line graph of $G$.
 

In order to avoid confusion throughout the paper, we will use a ``standard'' notation for the vertex set and the edge set of the middle graph $M(G)$. In particular, if $V(G)=\{v_1,v_2,\dots, v_n\}$, then we set $V(M(G))=V(G)\cup \mathcal{M}$, where $\mathcal{M}=\{m_{ij}~|~ v_iv_j\in E(G)\}$ and $E(M(G))=\{v_im_{ij},v_jm_{ij}~|~ v_iv_j\in E(G)\}\cup E(L(G)) $.


The paper is organized as follows. In Section 2, we recall few known results on outer-connected domination numbers and domination numbers. In Section 3, we compute the connected domination number of the middle graph of a connected graph. In Section 4, we present some upper and lower bounds for $\gamma{\tilde{}} _{c}(M(G))$ in terms of the order of the graph $G$, we relate the outer-connected domination number of $M(G)$ to the edge cover number of $G$ and we compute explicitly $\gamma{\tilde{}} _{c}(M(G))$ for several known families of graphs. In Section 5, we compute the outer-connected domination number of the middle graphs of graphs obtained by some special operation. In Section 6, we present some Nordhaus-Gaddum like relations for the outer-connected domination number of middle graphs. We then conclude the paper with a section composed of open problems and conjectures.

\section{Preliminares}

In this short section, we recall three results which will be useful for our investigation.
\begin{Theorem}[\cite{ocds}]\label{outGwithdelta} 
	If $G$ is a connected graph of order $n$, then
	$$\gamma{\tilde{}} _{c}(G)\le n-\delta(G),$$
	where $\delta(G)$ is the minimum degree of a vertex in $G$.
\end{Theorem}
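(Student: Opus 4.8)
The plan is to prove the bound constructively, by writing down an explicit outer-connected dominating set of cardinality $n-\delta(G)$. Write $\delta:=\delta(G)$ and fix a vertex $v$ realizing this minimum degree. We may assume $n\ge 2$ (for $n=1$ the inequality is trivial), so $1\le \delta\le n-1$. The key preliminary observation is that any connected graph on at least two vertices has a \emph{non-cut vertex} (for instance a leaf of a spanning tree, whose deletion leaves the spanning tree, hence the graph, connected). Deleting such a vertex, then a non-cut vertex of what remains, and so on, produces an ordering $v_1,v_2,\dots,v_n$ of $V(G)$ with the property that $G[\{v_k,v_{k+1},\dots,v_n\}]$ is connected for every $k\in\{1,\dots,n\}$.

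With this ordering in hand, I would take $S=\{v_1,\dots,v_{n-\delta}\}$, so that $|S|=n-\delta$ (and $1\le n-\delta\le n-1$, so $S$ and $G-S$ are both nonempty). Two verifications remain. First, $G-S=G[\{v_{n-\delta+1},\dots,v_n\}]$ is connected, which is exactly the defining property of the ordering. Second, $S$ is a dominating set: every vertex of $S$ dominates itself, so the only point is that each of the $\delta$ vertices outside $S$ has a neighbour in $S$. For such a vertex $v_{n-t}$ with $0\le t\le \delta-1$, all of its (at least $\delta$) neighbours lie in $V(G)\setminus\{v_{n-t}\}$, whereas the set of vertices outside $S$ distinct from $v_{n-t}$, namely $\{v_{n-\delta+1},\dots,v_n\}\setminus\{v_{n-t}\}$, has exactly $\delta-1$ elements; hence $v_{n-t}$ must have at least one neighbour in $S$. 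This exhibits $S$ as an outer-connected dominating set and yields $\gamma{\tilde{}} _{c}(G)\le |S|=n-\delta(G)$.

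The step I expect to be the main obstacle — really the only nonroutine point — is setting up the vertex ordering whose every tail segment induces a connected subgraph; once that is available, the rest is a short counting argument comparing the degree lower bound $\delta$ with the size $\delta-1$ of the ``forbidden'' set of neighbours. One should also keep half an eye on degenerate situations such as $G$ complete (where $n-\delta=1$ and a single vertex works) or $n=1$, but all of these are consistent with the claimed bound. Note that the middle-graph machinery of the paper plays no role here: Theorem~\ref{outGwithdelta} is a general statement about connected graphs, quoted from \cite{ocds}.
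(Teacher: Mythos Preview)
Your argument is correct. The vertex-elimination ordering obtained by repeatedly removing a non-cut vertex gives tails $\{v_k,\dots,v_n\}$ inducing connected subgraphs, so $G-S$ is connected; and the pigeonhole step---each of the $\delta$ vertices outside $S$ has at least $\delta$ neighbours but only $\delta-1$ other vertices outside $S$ to choose from---cleanly forces a neighbour in $S$. One cosmetic remark: you begin by fixing a vertex $v$ of minimum degree, but $v$ never reappears in the construction or the estimates and can simply be dropped.

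Regarding comparison with the paper: there is nothing to compare against. Theorem~\ref{outGwithdelta} is stated in the Preliminaries section purely as a quotation from \cite{ocds}; the present paper neither proves it nor sketches a proof, it only invokes the bound later (in Corollary~\ref{gamaact<gamactildt}). Your self-contained proof is therefore an addition rather than a duplication, and the observation in your final paragraph---that none of the middle-graph machinery is relevant here---is exactly right.
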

\begin{Theorem}[\cite{DSFBME}]\label{theo:mindominconnectgraph}
	Let $G$ be a graph with $n\ge2$ vertices. Assume $G$ has no isolated vertices, then 
	$$\lceil\frac{n}{2}\rceil\le\gamma(M(G))\le n-1.$$
\end{Theorem}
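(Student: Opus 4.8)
The plan is to prove the two inequalities independently, both by short combinatorial arguments that exploit the structure of $M(G)$ recalled above: $V(M(G))=V(G)\cup\mathcal{M}$, the original vertices $V(G)$ form an independent set of $M(G)$, each $v_i$ is adjacent in $M(G)$ precisely to the vertices $m_{ij}$ with $v_iv_j\in E(G)$, and the subgraph of $M(G)$ induced on $\mathcal{M}$ is the line graph $L(G)$.

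For the lower bound, I would take an arbitrary dominating set $S$ of $M(G)$ and write $S=S_V\cup S_E$ with $S_V=S\cap V(G)$ and $S_E=S\cap\mathcal{M}$. Since $N_{M(G)}[v_i]=\{v_i\}\cup\{m_{ij}\mid v_iv_j\in E(G)\}$, each original vertex $v_i$ either belongs to $S_V$ or is an endpoint, in $G$, of an edge $v_iv_j$ with $m_{ij}\in S_E$. A vertex $m_{ij}\in S_E$ can play this role for at most its two endpoints $v_i,v_j$, whereas a vertex of $S_V$ serves only for itself; hence $n\le |S_V|+2|S_E|\le 2\bigl(|S_V|+|S_E|\bigr)=2|S|$. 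Thus $|S|\ge n/2$, and since $|S|$ is an integer, $|S|\ge\lceil n/2\rceil$; as $S$ was arbitrary, $\gamma(M(G))\ge\lceil n/2\rceil$.

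For the upper bound, I would exhibit an explicit dominating set of size at most $n-1$. Let $G_1,\dots,G_c$ be the connected components of $G$, each with $n_i=|V(G_i)|\ge 2$ since $G$ has no isolated vertices, and fix a spanning tree $T_i$ of $G_i$. Set $S=\{\,m_{ab}\mid v_av_b\in E(T_1)\cup\cdots\cup E(T_c)\,\}$, so that $|S|=\sum_{i=1}^{c}(n_i-1)=n-c\le n-1$. This $S$ dominates $M(G)$: every $v_i$ is incident, inside its own component, to some tree edge $v_iv_j$, hence is adjacent to $m_{ij}\in S$; and given any $m_{k\ell}\in\mathcal{M}$, either $v_kv_\ell$ is a tree edge, so $m_{k\ell}\in S$, or else $v_k$ is incident to a tree edge $v_kv_p$ and then $m_{kp}\in S$ is adjacent to $m_{k\ell}$ in $L(G)\subseteq M(G)$, because the edges $v_kv_p$ and $v_kv_\ell$ share the vertex $v_k$. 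Therefore $\gamma(M(G))\le|S|\le n-1$.

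Neither direction is a genuine obstacle: the lower bound is a one-line double counting, and the upper bound a single explicit construction. The only points that need a little care are handling disconnected $G$ in the upper bound (treated above, where additional components only make $S$ smaller) and, if tightness of the two bounds is also wanted, exhibiting extremal families — for instance one checks that complete graphs realize the value $\lceil n/2\rceil$ and stars realize the value $n-1$ — which I would record in a short remark after the proof.
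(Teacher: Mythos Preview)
Your argument is correct on both sides: the double-counting for the lower bound (each $m_{ij}\in S$ covers at most two original vertices, each $v_i\in S$ covers only itself) and the spanning-forest construction for the upper bound are both sound, and you handled the possibly disconnected case cleanly.

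However, there is nothing in the present paper to compare against: this theorem appears in the Preliminaries section as a result quoted without proof from \cite{DSFBME}. The paper simply cites it and uses it downstream (e.g.\ in the proofs of Theorems~\ref{theo:mindomintree} and~\ref{outfriendship}), so no comparison of approaches is possible here.
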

\begin{Theorem}[\cite{DSFBME}]\label{covernumber}
	Let $G$ be a graph of order $n\geq 2$ with no isolated vertex. Then $$\gamma(M(G))= \rho(G),$$
	where $\rho(G)$ is the edge cover number of $G$, i.e the minimum cardinality of an edge cover of $G$.
\end{Theorem}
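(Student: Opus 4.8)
The plan is to establish the two inequalities $\gamma(M(G))\le\rho(G)$ and $\rho(G)\le\gamma(M(G))$ by explicit constructions that move between edge covers of $G$ and dominating sets of $M(G)$ that live (after a small modification) inside $\mathcal{M}$.

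For the inequality $\gamma(M(G))\le\rho(G)$, I would take a minimum edge cover $C$ of $G$ and set $S=\{m_{ij}\mid v_iv_j\in C\}\subseteq\mathcal{M}\subseteq V(M(G))$, so that $|S|=\rho(G)$. Then I would verify that $S$ dominates $M(G)$: every $v_i\in V(G)$ is incident in $G$ with some edge $v_iv_j\in C$, hence $v_i$ is adjacent in $M(G)$ to $m_{ij}\in S$; and every $m_{ij}\in\mathcal{M}$ is either itself in $S$, or $v_iv_j\notin C$ and then (since $C$ covers, say, $v_i$) there is $v_iv_k\in C$ with $m_{ik}\in S$, and $m_{ik}$ is adjacent to $m_{ij}$ in $M(G)$ because $v_iv_k$ and $v_iv_j$ meet at $v_i$, i.e.\ $m_{ik}m_{ij}\in E(L(G))$.

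For the reverse inequality, I would start from a $\gamma$-set $D$ of $M(G)$, write $D=D_V\cup D_M$ with $D_V=D\cap V(G)$ and $D_M=D\cap\mathcal{M}$, and produce an edge set $C\subseteq E(G)$ by: (i) adding $v_iv_j$ to $C$ whenever $m_{ij}\in D_M$, and (ii) for each $v_i\in D_V$, adding to $C$ an arbitrarily chosen edge of $G$ incident with $v_i$ --- such an edge exists precisely because $G$ has no isolated vertex. Clearly $|C|\le|D_M|+|D_V|=|D|=\gamma(M(G))$. To see that $C$ is an edge cover, fix $v_k\in V(G)$: if $v_k\in D_V$, then step (ii) put an edge at $v_k$ into $C$; if $v_k\notin D_V$, then since $D$ dominates $M(G)$ and the only neighbours of $v_k$ in $M(G)$ are the vertices $m_{kj}$ (the set $V(G)$ being independent in $M(G)$), some $m_{kj}$ lies in $D_M$, and then $v_kv_j\in C$ covers $v_k$. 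Hence $\rho(G)\le|C|\le\gamma(M(G))$, and combining the two bounds gives $\gamma(M(G))=\rho(G)$.

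The only points that require care --- rather than being a genuine obstacle --- are the asymmetry of $M(G)$ (independence of $V(G)$ in $M(G)$, which is what forces a non-selected $v_k$ to be dominated by an $\mathcal{M}$-vertex and hence yields the covering property) and the role of the ``no isolated vertex'' hypothesis, which guarantees that the replacement in step (ii) is well defined. No estimates or further case analysis are needed.
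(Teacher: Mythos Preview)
Your argument is correct: both constructions work exactly as described, and the two points you flag (independence of $V(G)$ in $M(G)$, and the no-isolated-vertex hypothesis) are precisely where the verifications hinge. Note, however, that the present paper does not actually prove this theorem; it is quoted in the Preliminaries section as a result of \cite{DSFBME} with no proof given here, so there is nothing to compare against within this paper. Your proof is the natural one and is presumably close to what appears in \cite{DSFBME}.
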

\section{\bf Connected domination number of middle graphs}
In this section, we calculate the exact value of the connected domination number $\gamma _{c}(M(G))$ for any connected graph $G$ of order $n \geq 3$.
\begin{Theorem}\label{gammacMG}
For any connected graph $G$ of order $n \geq 3$,
$$\gamma _{c}(M(G))=n-1.$$
\end{Theorem}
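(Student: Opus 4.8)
The plan is to prove the two inequalities $\gamma_c(M(G))\le n-1$ and $\gamma_c(M(G))\ge n-1$ separately: the first by exhibiting an explicit connected dominating set coming from a spanning tree of $G$, and the second by a structural argument showing that every connected dominating set of $M(G)$ induces a connected spanning subgraph of $G$.

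For the upper bound, I would fix a spanning tree $T$ of $G$ (possible since $G$ is connected) and set $S=\{m_{ij}\mid v_iv_j\in E(T)\}\subseteq\mathcal{M}$, so that $|S|=n-1$. To see that $S$ dominates $M(G)$: each $v_i\in V(G)$ is incident in $T$ to some edge $v_iv_j$, hence adjacent in $M(G)$ to $m_{ij}\in S$; and each $m_{k\ell}\in\mathcal{M}$ not already in $S$ shares an endpoint, say $v_k$, with some tree edge $v_kv_j$, hence is adjacent to $m_{kj}\in S$ through a line-graph edge. Since every element of $S$ corresponds to an edge of $T$, and adjacency among $\mathcal{M}$-vertices in $M(G)$ is exactly adjacency in $L(G)$, we get $M(G)[S]\cong L(T)$; as $T$ is connected with $n-1\ge 2$ edges, $L(T)$ is connected, so $S$ is a connected dominating set and $\gamma_c(M(G))\le n-1$.

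For the lower bound, let $S$ be any connected dominating set of $M(G)$. First note $|S|\ge 2$: the set $V(G)$ is independent in $M(G)$, so a single $v_i$ dominates only the $m_{ij}$ incident to it, and a single $m_{ij}$ dominates only $v_i,v_j$ among $V(G)$ — neither suffices when $n\ge 3$. Write $S_V=S\cap V(G)$, $S_M=S\cap\mathcal{M}$, and let $H$ be the spanning subgraph of $G$ with $E(H)=\{v_iv_j\mid m_{ij}\in S_M\}$. I would then establish three facts. First: no vertex of $S_V$ is isolated in $M(G)[S]$ (as $M(G)[S]$ is connected with $|S|\ge 2$), and since the only $S$-neighbours of $v_i\in S_V$ are the $m_{ij}$ with $v_iv_j\in E(H)$, this forces every vertex of $S_V$ to be incident to an edge of $H$. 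Second: every vertex of $V(G)\setminus S$ is, by domination, adjacent to some $m_{ij}\in S_M$, hence also incident to an edge of $H$; so $H$ has no isolated vertices. Third: $M(G)[S_M]\cong L(H)$, and for $v_i\in S_V$ every $S$-neighbour $m_{ij}$ corresponds to an edge of $H$ meeting $v_i$, hence lies in the single component of $L(H)$ coming from the component of $H$ containing $v_i$; thus $v_i$ cannot join two distinct components of $L(H)$.

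Combining these facts, connectivity of $M(G)[S]$ forces $L(H)$ to be connected, hence $H$ to be connected, and together with the absence of isolated vertices this makes $H$ a connected spanning subgraph of $G$; therefore $|E(H)|\ge n-1$ and $|S|\ge|S_M|=|E(H)|\ge n-1$. With the upper bound this yields $\gamma_c(M(G))=n-1$. The main obstacle is the third fact: one must argue carefully that original vertices lying in a connected dominating set cannot be used to stitch together separate components of $L(H)$, which is exactly the phenomenon that rules out connected dominating sets smaller than a spanning tree's worth of edge-vertices; everything else is a routine verification.
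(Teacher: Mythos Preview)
Your argument is correct, and for the upper bound it coincides with the paper's: both take a spanning tree $T$ of $G$ and observe that the set of $\mathcal{M}$-vertices corresponding to $E(T)$ is a connected dominating set of $M(G)$ of size $n-1$.

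For the lower bound, however, your route is genuinely different and considerably more complete than the paper's. The paper simply asserts that the spanning-tree edge set is ``the minimal connected dominating set'' of $M(G)$ (and in the tree case that $\mathcal{M}$ ``forms a unique minimal connected path''), without ever arguing that no connected dominating set of smaller cardinality can exist. Your proof supplies exactly what is missing: given an arbitrary connected dominating set $S$, you build the spanning subgraph $H$ of $G$ whose edges are those $v_iv_j$ with $m_{ij}\in S$, show that $H$ has no isolated vertices (via domination and the independence of $V(G)$ in $M(G)$), and then use the key observation that a vertex $v_i\in S\cap V(G)$ has all its $S$-neighbours among the $m_{ij}$ incident to $v_i$, so it cannot link distinct components of $L(H)$. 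Hence connectivity of $M(G)[S]$ forces $L(H)$, and therefore $H$, to be connected, giving $|S|\ge|S\cap\mathcal{M}|=|E(H)|\ge n-1$. This structural argument is the real content of the lower bound, and it is absent from the paper's proof; what your approach buys is an honest justification rather than an appeal to ``obviously''.
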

\begin{proof}
To fix notation, let $G$ be a connected graph with vertex set $V(G)=\{v_1, \cdots, v_n\}$. Then $V((M(G)))=V(G) \cup \mathcal{M}$ where $\mathcal{M}=\{m_{ij}~|~v_iv_j \in E(G)\}$. 
First assume that $G=T$ is a tree. Obviously, $\mathcal{M}$ forms a unique minimal connected path in $M(T)$ such that $N_{M(T)}[\mathcal{M}]=V(M(T))$. This implies that $\mathcal{M}$ is the minimal connected dominating set of $M(T)$ with $|\mathcal{M}|=n-1$, and hence $\gamma _{c}(M(G))=n-1$.

Now assume that $G$ is not tree. Then consider a spanning tree $H$ of $G$, and let $\mathcal{M}_1 \subseteq \mathcal{M}$ be the vertices subdividing the edges set of $H$ in $M(H)$.  Obviously, $\mathcal{M}_1$ forms a connected path in $M(H)$ such that $N_{M(H)}[\mathcal{M}_1]=V(M(H))$. Consider $\mathcal{M}_2 =\mathcal{M}\setminus\mathcal{M}_1$. Clearly $\mathcal{M}_1$  dominates all the vertices in $\mathcal{M}_2$, and hence $N_{M(G)}[\mathcal{M}_1]=\mathcal{M}_2 \cup V(M(H))=V(M(G))$. This implies that $\mathcal{M}_1$ forms the minimal connected path in $M(G)$ with $|\mathcal{M}_1|=n-1$, and hence, $\gamma _{c}(M(G))=n-1$.
\end{proof}

\section{\bf Outer-connected domination number of middle graphs}
As we will see in this section, the computation of the outer-connected domination number is more intricate than the one for the connected domination number.

We start our study by describing a lower and an upper bound for the outer-connected domination number of the middle graph.

\begin{Theorem}\label{theo:mindomintree}
Let $G$ be a connected graph with $n\ge2$ vertices. Then $$\lceil\frac{n}{2}\rceil\le\gamma{\tilde{}} _{c}(M(G))\le n.$$
\end{Theorem}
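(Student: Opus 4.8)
The plan is to establish the two inequalities separately, leaning on the results recalled in Section 2. For the lower bound, I would observe that any outer-connected dominating set of $M(G)$ is in particular a dominating set of $M(G)$, so by Theorem \ref{theo:mindominconnectgraph} (applicable since $G$ connected with $n\ge 2$ has no isolated vertex) we immediately get $\gamma{\tilde{}}_c(M(G)) \ge \gamma(M(G)) \ge \lceil n/2\rceil$. This is the easy half and needs no further work.

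For the upper bound I would exhibit an explicit outer-connected dominating set of size $n$. The natural candidate is $V(G) \subseteq V(M(G))$. First I would check that $V(G)$ is a dominating set of $M(G)$: every vertex $m_{ij}\in\mathcal{M}$ is adjacent in $M(G)$ to $v_i$ and $v_j$, and every $v_i\in V(G)$ dominates itself, so $N_{M(G)}[V(G)] = V(M(G))$. Next I would check the outer-connectedness condition, namely that $M(G) - V(G)$ is connected. By the definition of the middle graph, the induced subgraph of $M(G)$ on $\mathcal{M}$ is exactly the line graph $L(G)$; since $G$ is connected with at least one edge (as $n\ge 2$ and $G$ is connected), $L(G)$ is connected. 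Hence $M(G) - V(G) = M(G)[\mathcal{M}] = L(G)$ is connected, so $V(G)$ is an outer-connected dominating set, giving $\gamma{\tilde{}}_c(M(G)) \le |V(G)| = n$.

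I should dispense with one edge case: when $G = K_2$ (so $n = 2$, $m = 1$), $\mathcal{M}$ is a single vertex and $L(G)$ is a single vertex, which is connected, so the argument still goes through; in general $M(G)-V(G)$ being nonempty uses that $G$ has an edge. The main (very mild) obstacle is just making sure the line-graph identification $M(G)[\mathcal{M}] \cong L(G)$ and its connectedness are invoked cleanly — but both are built into the definitions recalled in the introduction, so no real difficulty arises. The bounds are both attained (e.g. the lower bound is relevant for highly structured graphs and the upper bound for graphs with a vertex of degree comparable to $n$), which justifies the word "tight" used in the abstract; I would note this and defer the sharpness examples to the later sections computing $\gamma{\tilde{}}_c(M(G))$ explicitly for specific families.
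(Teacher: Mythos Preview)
Your proposal is correct and follows essentially the same approach as the paper: the lower bound via $\gamma{\tilde{}}_c(M(G))\ge\gamma(M(G))\ge\lceil n/2\rceil$ from Theorem~\ref{theo:mindominconnectgraph}, and the upper bound by taking $D=V(G)$ as an outer-connected dominating set. You supply more justification than the paper does (the line-graph identification $M(G)[\mathcal{M}]=L(G)$ to verify outer-connectedness, and the $K_2$ edge case), but the argument is the same.
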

\begin{proof} 
	If we consider $D=V(G)$, then $D$ is an outer-connected dominating set of $M(G)$, and hence, $\gamma{\tilde{}} _{c}(M(G))\le n$,  proving the second inequality.
	
	By Theorem \ref{theo:mindominconnectgraph} we have $\gamma{\tilde{}} _{c}(M(G)) \ge \lceil\frac{n}{2}\rceil$,  proving the first inequality.
	\end{proof}
As an immediate consequence of \ref{covernumber}, we have the following result.
\begin{Corollary}
	Let $G$ be a graph of order $n\geq 2$ with no isolated vertex. Then $$\gamma{\tilde{}} _{c}(M(G)) \ge \rho(G).$$
\end{Corollary}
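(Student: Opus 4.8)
The plan is to exploit the fact that the defining requirement of an outer-connected dominating set is strictly stronger than that of an ordinary dominating set. Concretely, the first step is to note that if $S\subseteq V(M(G))$ is any outer-connected dominating set of $M(G)$, then $S$ is in particular a dominating set of $M(G)$, so $|S|\ge\gamma(M(G))$. Since this holds for every outer-connected dominating set $S$, taking the minimum over all such sets gives the comparison $\gamma{\tilde{}}_{c}(M(G))\ge\gamma(M(G))$.

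The second step is simply to invoke Theorem \ref{covernumber}: the hypotheses (order $n\ge2$, no isolated vertex) are exactly those assumed in the Corollary, so $\gamma(M(G))=\rho(G)$. Chaining this equality with the inequality from the first step yields $\gamma{\tilde{}}_{c}(M(G))\ge\rho(G)$, which is the assertion.

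The only point that needs a word of justification is that $\gamma{\tilde{}}_{c}(M(G))$ is meaningful at all, i.e. that $M(G)$ admits an outer-connected dominating set; this is immediate because $G$ has no isolated vertex, hence neither does $M(G)$, and one may take $D=V(G)$ as in the proof of Theorem \ref{theo:mindomintree}. Beyond this there is no real obstacle — the statement is genuinely an immediate corollary of Theorem \ref{covernumber} together with the trivial monotonicity $\gamma_{c}{\tilde{}}\ge\gamma$.
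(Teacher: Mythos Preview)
Your proposal is correct and mirrors exactly what the paper does: the paper states the corollary as ``an immediate consequence of Theorem~\ref{covernumber}'', which unpacks precisely to your two steps --- the trivial monotonicity $\gamma{\tilde{}}_{c}(M(G))\ge\gamma(M(G))$ followed by the identification $\gamma(M(G))=\rho(G)$. One small caveat: your existence remark invokes $D=V(G)$ as in Theorem~\ref{theo:mindomintree}, but that theorem assumes $G$ connected, whereas the corollary only assumes no isolated vertex; if $G$ is disconnected then $M(G)-V(G)=L(G)$ is disconnected too, so for full generality you would instead take, say, $D=V(M(G))\setminus\{v\}$ for any vertex $v$.
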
	
In the next theorem, we calculate the outer-connected domination number of the middle graph of a tree.
\begin{Theorem}\label{theo:outmindomintreeneqstar}
	Let $T$ be a tree with $n\ge2$ vertices. Then $$\gamma{\tilde{}} _{c}(M(T))= n.$$
\end{Theorem}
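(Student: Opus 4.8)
The plan is to show that no outer-connected dominating set of $M(T)$ can have fewer than $n$ vertices; combined with the upper bound $\gamma{\tilde{}}_c(M(T)) \le n$ from Theorem \ref{theo:mindomintree}, this gives equality. So let $D$ be an outer-connected dominating set of $M(T)$, and suppose for contradiction that $|D| \le n-1$. First I would split the vertices of $D$ into the ``original'' vertices $D_V = D \cap V(T)$ and the ``edge'' vertices $D_{\mathcal M} = D \cap \mathcal M$. The key observation is that, since $T$ has $n-1$ edges and $D$ dominates all of $M(T)$, the set $D$ in particular dominates every vertex $v_i \in V(T)$; the only neighbours of $v_i$ in $M(T)$ are the vertices $m_{ij}$ with $v_iv_j \in E(T)$, so for each $v_i \notin D_V$ there must be an incident edge vertex $m_{ij} \in D_{\mathcal M}$. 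This already forces $D_{\mathcal M}$ to be, essentially, an edge cover of the ``uncovered'' part of $T$.

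The heart of the argument is to analyze the graph $M(T) - D$ and use its connectivity. Since $|D| \le n-1 < n+(n-1) = |V(M(T))|$, the complement $M(T)-D$ is nonempty, and by hypothesis it is connected. I would argue that $M(T)-D$ must contain at least one original vertex $v_i$: indeed if $M(T)-D \subseteq \mathcal M$, then $D \supseteq V(T)$, forcing $|D| \ge n$, a contradiction. So pick $v_i \in V(M(T))\setminus D$. Every neighbour of $v_i$ in $M(T)$ is an edge vertex $m_{ij}$; for $M(T)-D$ to be connected and contain $v_i$ together with anything else, $v_i$ needs a neighbour outside $D$, i.e. some $m_{ij} \notin D$. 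Tracking this through, the surviving subgraph of $\mathcal M$ outside $D$ together with the surviving original vertices must stay connected, and I want to derive a counting contradiction: the edge vertices not in $D$ number at least $(n-1) - |D_{\mathcal M}|$, the original vertices not in $D$ number $n - |D_V|$, and $|D_V| + |D_{\mathcal M}| \le n-1$, so $M(T)-D$ has at least $(n-1-|D_{\mathcal M}|) + (n-|D_V|) \ge n$ vertices. The contradiction will come from showing such a large connected subgraph of $M(T)$ avoiding $D$ cannot be dominated back by $D$, or more directly that $D$ then fails to dominate some $m_{ij}$: an edge vertex $m_{ij} \notin D$ has neighbours $v_i, v_j$ and the adjacent edge vertices in $L(T)$, and if too few vertices lie in $D$ there is an $m_{ij}$ all of whose neighbours escape $D$.

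The cleanest route, and the one I would actually write up, is probably to use Theorem \ref{covernumber} together with the structure of trees: a dominating set of $M(T)$ has size at least $\rho(T)$, and for a tree $\rho(T) = n - \mu(T)$ where $\mu(T)$ is the matching number; but $\gamma{\tilde{}}_c$ asks for more than mere domination, so I would show that requiring $M(T)-D$ connected rules out the ``efficient'' dominating sets that achieve $\rho(T) < n$. Concretely: if $|D| < n$ then $D_V \subsetneq V(T)$, so some leaf or internal vertex $v_i \notin D$; its unique (if leaf) or several incident edge vertices must be handled, and propagating the connectivity requirement of $M(T)-D$ through the tree — which has no cycles, so removing vertices disconnects things easily — forces $D$ to contain a vertex for each edge in a spanning structure, pushing $|D|$ up to $n$. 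The main obstacle I anticipate is making the connectivity propagation rigorous without case explosion: trees give us the leverage that $M(T)-D$ being connected is very restrictive (any ``cut'' edge vertex removed splits the middle graph), and I expect the cleanest formulation is an induction on $n$, contracting a leaf edge of $T$ and checking the base case $n=2$ (where $M(T) = P_3$ and indeed $\gamma{\tilde{}}_c(P_3) = 2 = n$) directly.
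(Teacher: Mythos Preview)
Your proposal is not yet a proof: it sketches three separate strategies (a counting argument on $|M(T)-D|$, an edge-cover/connectivity-propagation argument, and an induction on $n$) without completing any of them. The observation that $|M(T)-D|\ge n$ when $|D|\le n-1$ is trivially true and leads nowhere by itself; the claim that ``if too few vertices lie in $D$ there is an $m_{ij}$ all of whose neighbours escape $D$'' is exactly what needs proof and you have not supplied it. Likewise the induction via leaf contraction is only asserted to be the ``cleanest formulation'' --- you have not checked that restricting $D$ to $M(T-v)$ yields an outer-connected dominating set of the right size, and that step needs the very structural facts you are missing.

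The two structural facts your argument lacks, and which the paper exploits directly, are: (i) every leaf $v_i$ of $T$ must lie in $D$, since otherwise its unique neighbour $m_{ij}$ is forced into $D$ by domination and then $v_i$ is an isolated vertex of $M(T)-D$; and (ii) no edge vertex $m_{ij}$ with both endpoints internal can lie in $D$, because such an $m_{ij}$ is a cut vertex of $M(T)$ (the tree has no alternate path between the two sides) and removing it disconnects $M(T)-D$. With (i) and (ii) in hand the count is immediate: the $l$ leaves contribute $l$ to $|D|$, and each of the $n-l$ internal vertices $v_i$ must be dominated by something in $N_{M(T)}[v_i]\cap D\subseteq\{v_i\}\cup\{m_{ij}:v_j\text{ a leaf}\}$; since these sets are pairwise disjoint for distinct internal $v_i$, they contribute a further $n-l$, giving $|D|\ge n$. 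Your write-up circles around these ideas (``removing vertices disconnects things easily'') but never isolates or uses them.
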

\begin{proof}
 Let $T$ be a tree of order $n$ with $V(T)=\{v_1, \dots, v_n\}$. Then $V(M(T))=V(T) \cup \mathcal{M}$ where $\mathcal{M}=\{m_{ij}~|~ v_iv_j\in E(T)\}$. 
 Let $\mathcal{L}=\{v_i \in V ~|~d_{T}(v_i)=1\}$ be the set of leaves of $T$ with $|\mathcal{L}|=l$, and consider
 \[
 \mathcal{M}_{1}=\{m_{ij}~|~ v_i \in \mathcal{L}~ \mbox{or}~ v_j \in \mathcal{L}\},
 \]
 
 \[
 \mathcal{M}_{2}=\mathcal{M} \setminus \mathcal{M}_{1}.
 \]

If there exists a vertex $v_i \in \mathcal{L}$ such that $v_i \notin D$, since $ N_{M(T)}[v_i]\cap D\neq \emptyset$, then $ N_{M(T)}[v_i]\cap D=\{m_{ij}\}$ for some $m_{ij} \in \mathcal{M}_{1}$. As a consequence $m_{ij} \in D$ and $v_i \notin D$, and hence $M(G)-D$ is disconnected, which is a contradiction. This implies that $\mathcal{L} \subseteq D$ and $|D \cap \mathcal{L}|=l$.

 Let $m_{ij} \in \mathcal{M}_{2}$ be such that $m_{ij} \in D$. Then, obviously  $M(G)-D$ is disconnected, which is a contradiction. As a consequence, $\mathcal{M}_{2} \cap D=\emptyset$.
 
 Now since for any $v_i \in V(T) \setminus \mathcal{L}$ we have that $ N_{M(T)}[v_i]\cap D\neq \emptyset$ and $N_{M(T)}[v_i]\cap D \subseteq \mathcal{M}_{1} \cup (V(T) \setminus \mathcal{L})$, and for every distinct $v_i, v_j \in V(T) \setminus \mathcal{L}$ we have that $(N_{M(T)}[v_i]\cap D) \cap (N_{M(T)}[v_j]\cap D)=\emptyset $, this implies that $|D \cap (\mathcal{M}_{1} \cup (V(T) \setminus \mathcal{L}))| \geq n-l$. Hence $$|D|=|D \cap \mathcal{L}|+|D \cap (\mathcal{M}_{1} \cup (V(T) \setminus \mathcal{L}))| \geq l+(n-l)=n.$$
By Theorem \ref{theo:mindomintree}, we conclude that $\gamma{\tilde{}} _{c}(M(T))= n.$

\end{proof}
\begin{Remark}
By Theorem \ref{theo:outmindomintreeneqstar}, the upper bound described in Theorem \ref{theo:mindomintree} is tight.
\end{Remark}

\begin{Corollary}\label{gamaact<gamactildt}
If $T$ is a tree of order $n$, then
$$\gamma{\tilde{}} _{c}(T) <\gamma{\tilde{}} _{c}(M(T)). $$
\end{Corollary}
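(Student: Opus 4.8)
The plan is to show $\gamma{\tilde{}} _{c}(T) < n$ and then invoke Theorem \ref{theo:outmindomintreeneqstar}, which gives $\gamma{\tilde{}} _{c}(M(T)) = n$. So it suffices to exhibit, for every tree $T$ of order $n$, an outer-connected dominating set of $T$ with at most $n-1$ vertices; equivalently, to find a single vertex $v$ such that $V(T)\setminus\{v\}$ is a dominating set and $T-(V(T)\setminus\{v\}) = T[\{v\}]$ is connected. The second condition is automatic, since a single vertex induces a connected (indeed trivial) graph. So the whole problem reduces to: every tree on $n\ge 1$ vertices has a vertex $v$ whose deletion leaves a dominating set, i.e.\ such that every vertex of $T$ other than $v$ has a neighbour in $V(T)\setminus\{v\}$ — in other words, $v$ is not the unique neighbour of any vertex.

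First I would handle the degenerate cases: if $n=1$ the statement $\gamma{\tilde{}} _{c}(T) < \gamma{\tilde{}} _{c}(M(T))$ should be checked directly (or the hypothesis $n\ge 2$ of Theorem~\ref{theo:outmindomintreeneqstar} tacitly excludes it), and if $n=2$ then $T=K_2$, $\gamma{\tilde{}} _{c}(T)=1 < 2 = \gamma{\tilde{}} _{c}(M(T))$. For $n\ge 3$, the key step is the following observation: take $v$ to be a leaf of $T$, and let $u$ be its unique neighbour. I claim $V(T)\setminus\{v\}$ dominates $T$. Indeed every vertex $w\neq v$ other than $u$ is not adjacent to $v$ (since $v$ is a leaf with sole neighbour $u$), so $w$ retains all of its neighbours, and since $T$ is connected on $\ge 3$ vertices $w$ has at least one neighbour, which lies in $V(T)\setminus\{v\}$; and $u$ itself has degree $\ge 1$ in $T$, with some neighbour $\neq v$ provided $\deg_T(u)\ge 2$. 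The only snag is if $\deg_T(u)=1$, i.e.\ $T=K_2$, which we have already dispatched. Hence for $n\ge 3$, $D=V(T)\setminus\{v\}$ is a dominating set, $T-D$ is the single vertex $v$ hence connected, so $\gamma{\tilde{}} _{c}(T)\le n-1$.

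Combining, $\gamma{\tilde{}} _{c}(T)\le n-1 < n = \gamma{\tilde{}} _{c}(M(T))$ by Theorem~\ref{theo:outmindomintreeneqstar}, which is the claim. I do not expect any real obstacle here: the content is entirely in the trivial remark that a leaf can always be removed from a dominating set of a tree with at least three vertices, and the rest is bookkeeping on small cases. Alternatively, one could cite Theorem~\ref{outGwithdelta}: a tree with $n\ge 2$ vertices has $\delta(T)=1$, so $\gamma{\tilde{}} _{c}(T)\le n-1 < n$, which is even shorter — the only care needed is that Theorem~\ref{outGwithdelta} requires $G$ connected, which holds for a tree, and that its conclusion $n-\delta(T)=n-1$ is strictly less than $n$. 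I would probably present this second route as the main proof and relegate the leaf argument to a parenthetical remark.
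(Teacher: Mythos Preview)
Your proposal is correct, and in fact your suggested ``second route'' via Theorem~\ref{outGwithdelta} is precisely the paper's proof: since $\delta(T)=1$ for a tree with $n\ge 2$, one gets $\gamma{\tilde{}} _{c}(T)\le n-1<n=\gamma{\tilde{}} _{c}(M(T))$ by Theorem~\ref{theo:outmindomintreeneqstar}. Your leaf-removal argument is just an elementary unpacking of the same bound $\gamma{\tilde{}} _{c}(T)\le n-1$, so there is no substantive difference in approach.
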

\begin{proof} By Theorem \ref{theo:outmindomintreeneqstar} $\gamma{\tilde{}} _{c}(M(T))=n$. On the other hand, $\gamma{\tilde{}} _{c}(T)\leq n-1$ by Theorem \ref{outGwithdelta}, and hence we obtain the described inequality.
\end{proof}

Recall that the \emph{line graph} $L(G)$ of a graph $G$ is the graph with vertex set $E(G)$, where vertices $x$ and $y$ are adjacent in $L(G)$ if and only if the corresponding edges $x$ and $y$ share a common vertex in $G$. Directly from this definition and Theorem \ref{theo:outmindomintreeneqstar}, we obtain the following result.

\begin{Corollary}\label{LTREE<MTREE}
For any tree $T$ of order $n\geq 2$, $$\gamma{\tilde{}} _{c}(L(T)) < \gamma{\tilde{}} _{c}(M(T)).$$
\end{Corollary}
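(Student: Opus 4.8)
The plan is to exhibit an outer-connected dominating set of $L(T)$ of size strictly less than $n$, since Theorem~\ref{theo:outmindomintreeneqstar} already gives $\gamma{\tilde{}}_c(M(T)) = n$. Recall that $V(L(T)) = E(T)$, so $L(T)$ has exactly $n-1$ vertices; thus \emph{any} outer-connected dominating set of $L(T)$ has size at most $n-1 < n$, provided such a set exists. So the only real content is to check that $L(T)$ admits an outer-connected dominating set at all, i.e. that $\gamma{\tilde{}}_c(L(T))$ is well-defined.

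First I would dispose of the degenerate case $n=2$: then $T$ is a single edge, $L(T)$ is a single vertex $K_1$, and the empty set is an outer-connected dominating set (it dominates $K_1$ vacuously? — no, it must dominate; instead take $S = V(L(T))$, so $L(T)-S$ is empty, which is conventionally connected), giving $\gamma{\tilde{}}_c(L(T)) \le 1 < 2$. For $n \ge 3$, the key observation is that $L(T)$ is connected (the line graph of a connected graph on at least two vertices is connected), so the full vertex set $S = V(L(T)) = E(T)$ is trivially a dominating set with $L(T) - S = \emptyset$ connected; hence $\gamma{\tilde{}}_c(L(T)) \le |E(T)| = n-1$. Combining with $\gamma{\tilde{}}_c(M(T)) = n$ from Theorem~\ref{theo:outmindomintreeneqstar} yields $\gamma{\tilde{}}_c(L(T)) \le n-1 < n = \gamma{\tilde{}}_c(M(T))$.

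Alternatively, and perhaps more in the spirit of the paper, one can identify $L(T)$ with the induced subgraph $M(T)[\mathcal{M}]$ of the middle graph on the vertices $\mathcal{M} = \{m_{ij} \mid v_iv_j \in E(T)\}$: the adjacencies among $\mathcal{M}$-vertices in $M(T)$ are exactly the line-graph adjacencies. Then any outer-connected dominating set of this subgraph has cardinality at most $|\mathcal{M}| = n-1$, and one recovers the bound $\gamma{\tilde{}}_c(L(T)) \le n-1$ directly, then compares with Theorem~\ref{theo:outmindomintreeneqstar}.

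The only subtlety — and it is more a matter of convention than a genuine obstacle — is whether the whole vertex set is permitted as an outer-connected dominating set, i.e. whether the empty graph $L(T) - S = \emptyset$ counts as connected; if the convention in \cite{ocds} forbids this, one instead takes $S = V(L(T)) \setminus \{w\}$ for a suitably chosen vertex $w$ (for instance $w$ corresponding to a pendant edge of $T$, whose removal still leaves $S$ dominating and leaves $L(T) - S = \{w\}$, a single vertex, connected), which gives $\gamma{\tilde{}}_c(L(T)) \le n-2 < n$ for $n \ge 3$ and still beats $n$. Either way the strict inequality holds, so this is the one step I would state carefully but expect to be routine.
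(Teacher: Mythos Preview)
Your proposal is correct and follows essentially the same route as the paper: both argue that $|V(L(T))|=|E(T)|=n-1$, so $\gamma{\tilde{}}_{c}(L(T))\le n-1$, and then invoke Theorem~\ref{theo:outmindomintreeneqstar} to get $\gamma{\tilde{}}_{c}(M(T))=n$. The paper simply asserts the bound $\gamma{\tilde{}}_{c}(L(T))\le n-1$ without comment, whereas you additionally justify the existence of an outer-connected dominating set and address the empty-complement convention; this extra care is harmless but not required by the paper's standards.
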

\begin{proof}
By definition $V(L(T))=E(T)$ and hence, $|V(L(T))|=n-1$. This clearly implies that $\gamma{\tilde{}} _{c}(L(T))\leq n-1$. Hence $\gamma{\tilde{}} _{c}(L(T)) \leq n-1 <n= \gamma{\tilde{}} _{c}(M(T))$ by Theorem \ref{theo:outmindomintreeneqstar}.
\end{proof}
By Theorem \ref{theo:outmindomintreeneqstar}, we can characterize the trees by looking at the outer-connected domination number of their middle graph.
\begin{Theorem}\label{outGisnwhenGistree}
Let $G$ be a connected graph with $n\ge4$ vertices. Then
$$\gamma{\tilde{}} _{c}(M(G))= n~~\text{if and only if $G$~ is a tree}.$$
\end{Theorem}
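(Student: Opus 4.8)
The plan is to prove the two implications separately. The direction ``if $G$ is a tree then $\gamma{\tilde{}}_c(M(G)) = n$'' is already established by Theorem \ref{theo:outmindomintreeneqstar}, so the content lies entirely in the converse: assuming $G$ is a connected graph on $n \ge 4$ vertices with $\gamma{\tilde{}}_c(M(G)) = n$, I must show $G$ contains no cycle. I would prove the contrapositive: suppose $G$ is connected but not a tree, and exhibit an outer-connected dominating set of $M(G)$ of size at most $n-1$, which forces $\gamma{\tilde{}}_c(M(G)) \le n-1 < n$.

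To build such a set, I would start from a spanning tree $H$ of $G$ and exploit the extra edges of $G$ not in $H$. Let $e = v_av_b \in E(G)\setminus E(H)$ be one such chord, so $m_{ab} \in \mathcal{M}$ is the corresponding middle-vertex. The idea is to take $D = V(G) \setminus \{v_a\}$ for a carefully chosen endpoint $v_a$ of the chord, and check that (i) $D$ dominates $M(G)$ and (ii) $M(G) - D$ is connected. For domination: every vertex $v_i \in V(G)$ with $i \ne a$ lies in $D$; the vertex $v_a$ is dominated because $v_b \in D$ is adjacent to $m_{ab}$... wait — more directly, $v_a$ is adjacent in $M(G)$ to $m_{ab}$, and $m_{ab}$ is adjacent to $v_b \in D$, but that is distance two; instead note $v_a$ has a neighbor in $H$, say $v_c$, giving $m_{ac} \in \mathcal{M}$ adjacent to both $v_a$ and $v_c \in D$, so actually I need $v_a \in N_{M(G)}[D]$ — it is enough that some middle-vertex incident to $v_a$ has its \emph{other} endpoint in $D$, which holds since $\deg_G(v_a) \ge 1$ and $n \ge 4$ guarantees another vertex exists; more carefully one picks $v_a$ to be a non-cut-vertex or a suitable endpoint so this works. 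Every $m_{ij} \in \mathcal{M}$ is dominated because at least one of $v_i, v_j$ lies in $D$ (both do unless $\{i,j\} \ni a$, and $m_{ab}$ itself is then dominated by $v_b$). For connectivity of $M(G) - D$: the complement graph has vertex set $\mathcal{M} \cup \{v_a\}$; the set $\mathcal{M}$ induces the line graph $L(G)$, which is connected because $G$ is connected, and $v_a$ attaches to $\mathcal{M}$ via the middle-vertices incident to it, so $M(G) - D$ is connected.

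The main obstacle I anticipate is the domination of the single removed vertex $v_a$ together with keeping $M(G) - D$ connected — these two requirements pull in opposite directions, and the choice of which vertex of $G$ to delete must be made so that $v_a$ still has a dominating neighbor in $M(G)$ (which requires $v_a$ to have an $H$-neighbor, hence any vertex works once $n \ge 2$) while $L(G) \cup \{v_a\}$ stays connected (automatic). A subtler point is ensuring we genuinely get size $n-1$ and not an accidental obstruction when $G$ is very small or very sparse; the hypothesis $n \ge 4$ is presumably there to rule out degenerate cases such as triangles or small graphs where $V(G)\setminus\{v_a\}$ behaves atypically, and I would verify the argument goes through for $n = 4, 5$ by hand, or alternatively pin down exactly where $n \ge 4$ enters. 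Once the construction is validated, combining $\gamma{\tilde{}}_c(M(G)) \le n-1$ with the forward implication from Theorem \ref{theo:outmindomintreeneqstar} completes the equivalence.
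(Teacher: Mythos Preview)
Your overall strategy --- prove the contrapositive by exhibiting an outer-connected dominating set of $M(G)$ of size $n-1$ whenever $G$ contains a cycle --- is exactly the paper's approach. However, your specific construction $D = V(G)\setminus\{v_a\}$ does not work, and the sentence ``it is enough that some middle-vertex incident to $v_a$ has its \emph{other} endpoint in $D$'' is where the argument breaks.

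Recall that in $M(G)$ the original vertices $V(G)$ form an independent set: $v_a$ is adjacent only to the middle-vertices $m_{ac}$ with $v_c\sim_G v_a$, and never to any $v_c$. Since your $D$ contains no middle-vertex at all, $N_{M(G)}[v_a]\cap D=\emptyset$, so $D$ is not even a dominating set. The condition you wrote down (``some $m_{ac}$ has its other endpoint $v_c$ in $D$'') only certifies that $m_{ac}$ is dominated, not that $v_a$ is. Simply adding a middle-vertex to $D$ to dominate $v_a$ would push the size back up to $n$, which is useless.

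The paper repairs this by removing \emph{two} original vertices and inserting \emph{one} middle-vertex between them: if $v_1v_2\cdots v_k v_1$ is a cycle in $G$, take
\[
D=\{v_3,v_4,\dots,v_n\}\cup\{m_{12}\},
\]
which has size $n-1$. Now $v_1$ and $v_2$ are dominated by $m_{12}\in D$, every $m_{ij}$ with $\{i,j\}\ne\{1,2\}$ has an endpoint in $\{v_3,\dots,v_n\}$, and $M(G)-D=\{v_1,v_2\}\cup(\mathcal{M}\setminus\{m_{12}\})$ is connected because $L(G)-m_{12}\cong L(G-v_1v_2)$ is the line graph of a connected graph (the edge $v_1v_2$ lies on a cycle, so its removal does not disconnect $G$), and $v_1,v_2$ attach to it via $m_{1k}$ and $m_{23}$ respectively. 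Your spanning-tree/chord setup is compatible with this fix: taking the chord $v_av_b$ and setting $D=(V(G)\setminus\{v_a,v_b\})\cup\{m_{ab}\}$ yields the same conclusion by an identical check.
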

\begin{proof}
Assume that $V(G)=\{v_1, \dots, v_n\}$. Then $V(M(G))=V(G) \cup \mathcal{M}$ where $\mathcal{M}=\{m_{ij}~|~ v_iv_j\in E(G)\}$. If $G$ is a tree, then $\gamma{\tilde{}} _{c}(M(G))= n$, by Theorem \ref{theo:outmindomintreeneqstar}. 
On the other hand, assume that $\gamma{\tilde{}} _{c}(M(G))= n$ and $G$ is not tree. Then $G$ contains at least a cycle of order $n \geq 3$ as a induced subgraph. Without loss of generality, assume that $G[v_1,v_2, \dots, v_k]$ is a cycle of length $k$,  for some $k \geq 3$. 
Consider $D=\{v_3,v_4, \dots, v_n\} \cup \{m_{12}\}$. Then $D$ is an outer-connected dominating set of $M(G)$ with $|D|=n-1$, and hence $\gamma{\tilde{}} _{c}(M(G))\leq n-1$, which is a contradiction. This implies that $G$ is a tree.
\end{proof}

In the next theorem we calculate outer-connected domination number for complete graph $K_n$ where $n \ge 3$. Notice that $K_2$ is a tree and hence $\gamma{\tilde{}} _{c}(M(K_{2}))=2$ by Theorem \ref{theo:outmindomintreeneqstar}.

\begin{Theorem}\label{theo:outmindomincomplete}
For any complete graph $ K_{n}$ of order $n \geq 3$, we have
	  $$\gamma{\tilde{}} _{c}(M(K_{n}))=\lceil n/2\rceil$$
\end{Theorem}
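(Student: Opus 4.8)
The plan is to prove the lower bound $\gamma{\tilde{}}_c(M(K_n)) \ge \lceil n/2 \rceil$ by invoking Theorem \ref{theo:mindomintree}, which already gives $\gamma{\tilde{}}_c(M(G)) \ge \lceil n/2 \rceil$ for any connected $G$ on $n$ vertices; since $K_n$ is connected for $n \ge 3$, the lower bound is immediate. The substance of the argument is therefore the upper bound: I must exhibit an outer-connected dominating set $D$ of $M(K_n)$ with $|D| = \lceil n/2 \rceil$. My candidate for $D$ is a set of $\lceil n/2 \rceil$ vertices of $\mathcal{M}$ that corresponds to a near-perfect matching of $K_n$, i.e. a set of edges $\{v_1v_2, v_3v_4, \dots\}$ covering all vertices of $K_n$ (when $n$ is odd, one extra edge overlapping an already-covered vertex). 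Concretely, for $n = 2t$ take $D = \{m_{12}, m_{34}, \dots, m_{(2t-1)(2t)}\}$, and for $n = 2t+1$ take $D = \{m_{12}, m_{34}, \dots, m_{(2t-1)(2t)}, m_{(2t)(2t+1)}\}$, which has $t+1 = \lceil n/2 \rceil$ elements.

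The first step is to check that $D$ dominates $M(K_n)$. Every $v_i \in V(K_n)$ is incident in $K_n$ to one of the chosen matching edges, hence adjacent in $M(K_n)$ to the corresponding vertex of $D$. Every $m_{ij} \in \mathcal{M} \setminus D$ shares an endpoint (in fact, since $D$ covers all of $V(K_n)$, both endpoints) with some edge in the matching, so $m_{ij}$ is adjacent in $L(K_n) \subseteq M(K_n)$ to a vertex of $D$. Thus $D$ is a dominating set. The second step is to check that $M(K_n) - D$ is connected. The vertex set of $M(K_n) - D$ is $V(K_n) \cup (\mathcal{M} \setminus D)$. Here I would argue that the induced subgraph on $V(K_n)$ alone is already connected for $n \ge 3$: any two distinct $v_i, v_j$ are joined via the path $v_i - m_{ik} - v_k - \cdots$, and more directly, since $n \ge 3$, for any $v_i, v_j$ there is a vertex $m_{ij} \in \mathcal{M}$, and if $m_{ij} \notin D$ then $v_i - m_{ij} - v_j$ is a path in $M(K_n) - D$; if $m_{ij} \in D$, pick a third vertex $v_k$ and note both $m_{ik}$ and $m_{jk}$ can be arranged to avoid $D$ (at most one matching edge is incident to each of $v_i, v_j$), giving $v_i - m_{ik} - v_k - m_{jk} - v_j$. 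Then every remaining $m_{pq} \in \mathcal{M} \setminus D$ attaches to this connected core via the edge $v_p - m_{pq}$, since $v_p \notin D$. Hence $M(K_n) - D$ is connected, $D$ is an outer-connected dominating set, and $\gamma{\tilde{}}_c(M(K_n)) \le \lceil n/2 \rceil$.

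The main obstacle I anticipate is purely bookkeeping in the connectivity verification for the odd case and for small $n$: I must make sure that when I claim a path $v_i - m_{ik} - v_k - m_{jk} - v_j$ avoids $D$, the chosen intermediate vertex $v_k$ and the edges $m_{ik}, m_{jk}$ are genuinely outside $D$, which requires choosing $k$ so that neither $v_iv_k$ nor $v_jv_k$ is one of the (at most two) matching edges meeting $\{v_i, v_j\}$ — this is possible as soon as $n \ge 3$ with a little care, but in the odd case the doubly-covered vertex needs separate attention. A clean way to sidestep all of this is to first establish the sublemma that for $n \ge 3$ the subgraph of $M(K_n)$ induced on $V(K_n) \cup \{m_{ij}\}$ for a single extra vertex is connected, then induct, but I expect the direct argument above to be shortest. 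Finally I would remark that this shows the lower bound of Theorem \ref{theo:mindomintree} is also tight, complementing the earlier remark about the upper bound.
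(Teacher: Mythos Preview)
Your approach is essentially identical to the paper's: the same near-perfect-matching set $D\subseteq\mathcal{M}$ for the upper bound, and Theorem~\ref{theo:mindomintree} for the lower bound. You supply considerably more detail in the verification of domination and outer-connectedness than the paper does (the paper simply asserts that $D$ works), which is fine.

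There is, however, one genuine slip: the paper treats $n=3$ as a separate case, taking $D=\{v_1,m_{23}\}$ rather than your $D=\{m_{12},m_{23}\}$, and this is not cosmetic. With your choice, the vertex $v_2$ has only two neighbours in $M(K_3)$, namely $m_{12}$ and $m_{23}$, both of which lie in $D$; hence $v_2$ is isolated in $M(K_3)-D$ and your $D$ is not an outer-connected dominating set. Your own caveat that ``the doubly-covered vertex needs separate attention'' is precisely this failure, and it cannot be repaired by choosing a different path: the set itself is wrong at $n=3$. For odd $n\ge 5$ the doubly-covered vertex still has $n-3\ge 2$ neighbours in $\mathcal{M}\setminus D$, so your connectivity argument goes through; you just need the single ad~hoc replacement at $n=3$.
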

\begin{proof}
Assume that $V(M(K_n))=V(K_n) \cup \mathcal{M}$ where $V(K_n)=\{v_1, \dots, v_n\}$ and $\mathcal{M}=\{m_{ij} ~|~v_i v_j \in E(G)\}$. 
When $n=3$, it is easy to check that $\gamma{\tilde{}} _{c}(M(K_{n}))=2$, by considering $D=\{v_1, m_{23}\}$. Now let $n \geq 4$. Assume that $n$ is even and consider \[
D=\{m_{12}, m_{34}, \dots, m_{(n-1)n}\}
.\] Then $D$ is an outer-connected dominating set of $M(G)$ with $|D|=\lceil n/2\rceil$. Similary, if $n$ is odd, consider 
\[D=\{m_{12}, m_{34}, \dots, m_{(n-2)(n-1)}, m_{(n-1)n}\}.\]

 Then $D$ is an outer-connected dominating set of $M(G)$ with $|D|=\lceil n/2\rceil$. This show that $\gamma{\tilde{}} _{c}(M(K_{n}))\leq \lceil n/2\rceil .$ On the other hand, by Theorem \ref{theo:mindomintree}, $\gamma{\tilde{}} _{c}(M(K_{n}))\geq \lceil n/2\rceil .$
 \\
\end{proof}
\begin{Remark}
By Theorem \ref{theo:outmindomincomplete}, the lower bound described in Theorem \ref{theo:mindomintree} is tight.
\end{Remark}

\begin{Theorem}\label{theomoutdomcycle}
For any cycle $C_n$ of order $n \geq 3$,  $$\gamma{\tilde{}} _{c}(M(C_{n}))=n-1.$$
\end{Theorem}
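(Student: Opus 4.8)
The proof will establish the two inequalities $\gamma{\tilde{}} _{c}(M(C_n))\le n-1$ and $\gamma{\tilde{}} _{c}(M(C_n))\ge n-1$ separately. Throughout, write $V(C_n)=\{v_1,\dots,v_n\}$ with edges $v_iv_{i+1}$ (indices mod $n$), so that $V(M(C_n))=V(C_n)\cup\mathcal{M}$ with $\mathcal{M}=\{m_{i(i+1)}\}$ of size $n$; recall that $\mathcal{M}$ induces a cycle in $M(C_n)$ (it is $L(C_n)\cong C_n$) and that $v_i$ is adjacent in $M(C_n)$ to exactly the two vertices $m_{(i-1)i}$ and $m_{i(i+1)}$.

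For the upper bound I would exhibit the explicit set $D=\{v_3,v_4,\dots,v_n\}\cup\{m_{12}\}$, of cardinality $n-1$. One checks quickly that $m_{12}$ dominates $v_1$ and $v_2$, that $v_3,\dots,v_n$ together with $m_{12}$ dominate every vertex of $\mathcal{M}$, and that $M(C_n)-D$ is the path $v_2-m_{23}-m_{34}-\cdots-m_{(n-1)n}-m_{n1}-v_1$, hence connected; so $D$ is an outer-connected dominating set and $\gamma{\tilde{}} _{c}(M(C_n))\le n-1$. (For $n\ge4$ one could instead simply invoke Theorem~\ref{outGisnwhenGistree}.)

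The lower bound is the substantive part. Let $D$ be any outer-connected dominating set and set $\bar D=V(M(C_n))\setminus D$; if $|\bar D|\le1$ then $|D|\ge 2n-1\ge n-1$, so assume $|\bar D|\ge2$. Since $M(C_n)[\bar D]$ is connected with at least two vertices, every $v_i\in\bar D$ has a neighbor in $\bar D$, i.e. $m_{(i-1)i}\in\bar D$ or $m_{i(i+1)}\in\bar D$; on the other hand domination of $v_i$ forces $m_{(i-1)i}\in D$ or $m_{i(i+1)}\in D$. Combining these, for every $v_i\in\bar D$ exactly one of $m_{(i-1)i},m_{i(i+1)}$ lies in $D$ and the other in $\bar D$; in particular each $v_i\in\bar D$ is a leaf of $M(C_n)[\bar D]$. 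I would then split on $b:=|D\cap\mathcal{M}|$. If $b=n$ then $|D|\ge n\ge n-1$. If $b=0$ the dichotomy above is impossible for any $v_i\in\bar D$, so $\bar D\cap V(C_n)=\emptyset$, whence $\bar D=\mathcal{M}$ and $|D|=n$. Finally, if $1\le b\le n-1$, then $\bar D\cap\mathcal{M}=\bar D\setminus(\bar D\cap V(C_n))$ is connected (a $\bar D$-path joining two vertices of $\mathcal{M}$ cannot run through a $v_i$, which is a leaf of $M(C_n)[\bar D]$), hence it is a nonempty proper connected subgraph of the $n$-cycle induced by $\mathcal{M}$, i.e. a sub-path $\{m_s,m_{s+1},\dots,m_{s+k-1}\}$ with $k=n-b$. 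For such a sub-path, the two $m$-vertices incident to $v_i$ are consecutive on that cycle and contain exactly one sub-path vertex only when $i\in\{s,\,s+k\}$; therefore $|\bar D\cap V(C_n)|\le2$, so $|\bar D|\le k+2=n-b+2$ and $|D|=2n-|\bar D|\ge n+b-2\ge n-1$. In all cases $|D|\ge n-1$, which together with the upper bound yields $\gamma{\tilde{}} _{c}(M(C_n))=n-1$.

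The main obstacle is this last structural analysis: showing that the vertices of $\mathcal{M}$ outside $D$ must form a single arc of the $m$-cycle and that at most two vertices of $V(C_n)$ can avoid $D$. Everything rests on the ``exactly one incident $m$-vertex in $D$'' dichotomy for $v_i\in\bar D$, so I would prove that first and then handle the degenerate cases $|\bar D|\le1$ and $b\in\{0,n\}$ separately, so that the arc argument applies without edge effects.
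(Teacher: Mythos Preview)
Your proof is correct. The upper-bound construction $D=\{v_3,\dots,v_n\}\cup\{m_{12}\}$ is exactly the paper's. For the lower bound the paper also cases on $b=|D\cap\mathcal{M}|$, but it treats $b=0,1,2$ and $b\ge3$ separately (with a further adjacent/non-adjacent split when $b=2$), and in the case $b\ge3$ argues directly that $D\cap\mathcal{M}$ must be a single arc of the $m$-cycle, since otherwise $M(C_n)-D$ disconnects. Your route is the dual: the observation that every $v_i\in\bar D$ has exactly one $m$-neighbour in $\bar D$ and is therefore a leaf of $M(C_n)[\bar D]$ forces $\bar D\cap\mathcal{M}$ to be connected, hence a single arc, and then $|\bar D\cap V(C_n)|\le2$ drops out uniformly for all $1\le b\le n-1$. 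The two arguments reach the same count $|D|\ge n+b-2$, but your leaf observation collapses the paper's four cases into one and avoids the ad hoc sub-case analysis at $b=2$.
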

\begin{proof}
To fix the notation, assume that $V(C_n)=\{v_1, \dots, v_n\}$ and $E(C_n)=\{v_1v_2, v_2v_3,\dots,v_{n-1}v_n,v_nv_1\}$. Then $V(M(C_n))=V(C_n)\cup \mathcal{M}$, where $\mathcal{M}=\{ m_{i(i+1)}~|~1\leq i \leq n-1 \}\cup\{m_{1n}\}$.
Consider $D=\{v_3 , \dots, v_n\} \cup \{m_{12}\}$, then $D$ is an outer-connected dominating set with $|D|=n-1$, and hence $\gamma{\tilde{}} _{c}(M(C_{n}))\le n-1.$

 Let $D$ be a minimal outer-connected dominating set of $M(C_{n})$.




If $\mathcal{M} \cap D=\emptyset$, then $N_{M(C_{n})}[v_i]\cap D \neq \emptyset$ for every $1 \leq i \leq n$, implies that $V \subseteq D$ and hence $|D| \geq n$, contradicting the minimality of $D$. 




Assume that $|\mathcal{M} \cap D|=1$. Without loss of generality, we can assume that $m_{12} \in D$. Then $N_{M(C_{n})}[v_i]\cap D \neq \emptyset$ for  $ i\neq 1,2$ implies that $\{v_3, \dots, v_n\} \subseteq D$ and so $\gamma{\tilde{}} _{c}(M(C_{n})) \geq n-1$, proving our statement.




Assume that $|\mathcal{M} \cap D|=2$. Let $m_{ij}, m_{pq} \in D$ for some $i,j,p,q$. First, assume that $m_{ij}$ is adjacent to $m_{pq}$ in $M(C_n)$. Without loss of generality, we can assume that $m_{12}, m_{23} \in D$. Since $M(C_n)-D$ is connected, then $v_2 \in D$. Moreover, $N_{M(C_{n})}[v_i]\cap D \neq \emptyset$ for  $ i\neq 1,2,3$, implies that $\{v_4, \dots, v_n\} \subseteq D$ and hence $|D| \geq (n-4+1)+3=n$, contradicting the minimality of $D$.
Assume now that $m_{ij}$ is non-adjacent to $m_{pq}$ in $M(C_n)$ with $i<j<p<q$.
Since $m_{ij}, m_{pq} \notin N_{M(C_{n})}[v_k]$ for every $k \in \{ 1, 2, \dots, n\}\setminus\{i,j,p,q\}$, then $|D \cap V(C_n)| \geq n-4$. On the other hand, since $M(C_n)-D$ is connected then $v_j, v_p \in D$. This implies that $|D|=|D \cap V(C_n)|+|D \cap \mathcal{M}| \geq (n-2)+2=n$, contradicting the minimality of $D$.



Assume that $|\mathcal{M} \cap D|=k \geq 3$. Since $M(C_n)-D$ is connected, then $k<n-1$. Without loss of generality, we can assume that $\mathcal{M} \cap D=\{m_{i_1(i_1+1)},\dots,m_{i_k(i_k+1)}\}$ where $i_1<i_2<\cdots<i_k$. If $i_j+1<i_{j+1}$ for some $1\le j\le k-1$, then $M(C_n)-D$ is disconnected. This implies that $i_j+1=i_{j+1}$ for all $1\le j\le k-1$. Let $I=\{i,i+1~|~m_{i(i+1)} \in D\}$, $V_1=\{v_i \in V~|~ i \notin I\}$. $N_{M(C_{n})}[v_i]\cap D \neq \emptyset$ for  $v_i \in V_1$, implies that $V_1 \subseteq D$. Moreover, since $M(C_n)-D$ is connected, $v_{i_1+1}, \dots, v_{i_k}\in D$.
As a consequence, $D=\{m_{i_1(i_1+1)},\dots,m_{i_k(i_k+1)}\}\cup V_1\cup \{v_{i_1+1}, \dots, v_{i_k}\}$. This implies that $|D|=k+(n-k-1)+k-1=n+k-2\ge n+1$, contradicting the minimality of $D$.

Therefore, $\gamma{\tilde{}} _{c}(M(C_{n}))= n-1.$
\end{proof}

\begin{Theorem} \label{OUTERWN} For any wheel $W_n$ of order $n\geq 4$, 
	$$\gamma{\tilde{}} _{c}(M(W_{n}))= \lceil n/2\rceil .$$
\end{Theorem}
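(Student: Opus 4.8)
The plan is to mirror the structure of the proof for complete graphs (Theorem \ref{theo:outmindomincomplete}): exhibit an explicit outer-connected dominating set of $M(W_n)$ of size $\lceil n/2 \rceil$ for the upper bound, and invoke the general lower bound $\gamma{\tilde{}}_c(M(W_n)) \ge \lceil n/2 \rceil$ from Theorem \ref{theo:mindomintree}. Here I write $W_n$ for the wheel on $n$ vertices, i.e.\ a hub $v_0$ joined to every vertex of a cycle $C_{n-1}$ on $v_1, \dots, v_{n-1}$ (one must be slightly careful which convention on the index $n$ the authors use, since they require $n \ge 4$; I will assume $W_n$ has $n$ vertices total, so the rim is a cycle of length $n-1$). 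The vertex set of $M(W_n)$ is $V(W_n) \cup \mathcal{M}$, where $\mathcal{M}$ contains the spoke-vertices $m_{0i}$ for $1 \le i \le n-1$ and the rim-vertices $m_{i(i+1)}$ (indices mod $n-1$).

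For the upper bound I would build $D$ out of spoke-vertices, because each $m_{0i}$ dominates both $v_0$ and $v_i$ in $M(W_n)$, so a set of $\lceil (n-1)/2 \rceil$ well-chosen spoke-vertices already dominates all of $V(W_n)$, and every spoke-vertex is adjacent in $M(W_n)$ to every other spoke-vertex (the corresponding edges of $W_n$ all meet at $v_0$), which helps keep $M(W_n) - D$ under control. The remaining thing to check is that (a) the vertices of $\mathcal{M}$ not in $D$ are all dominated — a rim-vertex $m_{i(i+1)}$ is adjacent to $m_{0i}$ and to $m_{0(i+1)}$, so I need the chosen spoke indices to form a dominating set of the rim cycle $C_{n-1}$, which a set of size $\lceil (n-1)/2\rceil$ can do (e.g.\ every other rim vertex, with a parity fix when $n-1$ is odd) — and (b) $M(W_n) - D$ is connected. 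For connectivity, the surviving graph contains all rim-vertices $m_{i(i+1)}$, which form a cycle (the line graph of the rim), all spokes $m_{0i}$ not chosen, each attached to that cycle, plus all original vertices $v_0, v_1, \dots, v_{n-1}$, each $v_i$ adjacent to a surviving spoke or rim vertex incident to it; so $M(W_n)-D$ is connected. A short case check is needed to confirm $|D| = \lceil n/2 \rceil$ exactly: with $n-1$ rim vertices, a minimum dominating set of $C_{n-1}$ has size $\lceil (n-1)/3 \rceil$, which is smaller than $\lceil n/2\rceil$, so there is slack — but I also need $D$ to be large enough to disconnect nothing and yet the lower bound forces $|D|\ge \lceil n/2\rceil$; the cleanest route is probably to take $D$ to consist of roughly $\lceil n/2 \rceil$ alternating \emph{rim}-vertices $m_{i(i+1)}$ instead (as in the $K_n$ proof), which dominate the rim $v_i$'s and the spokes $m_{0i}$, then add the hub $v_0$ or a single spoke if needed to dominate the leftover, and verify the count lands on $\lceil n/2\rceil$ and that the complement stays connected.

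The main obstacle I anticipate is \emph{pinning down the exact constant $\lceil n/2 \rceil$ rather than just an $O(n)$ bound}: dominating the rim is cheap (cost $\sim n/3$), so the binding constraint is really the outer-connectedness requirement, which forbids $D$ from separating the complement and in particular forbids isolating any $v_i$ or $m_{ij}$; one must choose $D$ so that it is simultaneously small, dominating, and leaves a connected complement, and then argue the size is forced to be at least $\lceil n/2\rceil$ — but that last part is exactly Theorem \ref{theo:mindomintree}, so the real work is only the construction. I would handle the parity of $n$ (equivalently of the rim length $n-1$) in two explicit cases, and separately dispatch the small cases $n = 4, 5$ by hand, since there $W_4 = K_4$ and the general alternating pattern may degenerate. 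Once the explicit $D$ with $|D| = \lceil n/2\rceil$ is verified to be an outer-connected dominating set, combining with the lower bound from Theorem \ref{theo:mindomintree} closes the proof.
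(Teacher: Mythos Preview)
Your overall strategy --- explicit construction for the upper bound, Theorem \ref{theo:mindomintree} for the lower bound --- is exactly what the paper does, and your second proposed construction (alternating rim-edge-vertices together with one spoke) is essentially the paper's choice.

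One concrete error to flag: your first attempted construction cannot work. You write that ``a set of $\lceil (n-1)/2 \rceil$ well-chosen spoke-vertices already dominates all of $V(W_n)$,'' but this is false. In $M(W_n)$ the only neighbours of a rim vertex $v_i$ (for $i\ge 1$) are $m_{0i}$, $m_{(i-1)i}$, and $m_{i(i+1)}$; in particular $m_{0j}$ is \emph{not} adjacent to $v_i$ when $j\ne i$. Hence a set of spoke-vertices $\{m_{0i} : i\in I\}$ dominates $v_i$ only when $i\in I$, so dominating all of $V(W_n)$ with spokes alone would require all $n-1$ of them. Your pivot to rim-edge-vertices is the right fix.

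For the record, the paper takes $D=\{m_{12}, m_{34}, \dots, m_{(n-3)(n-2)}\}\cup\{m_{0(n-1)}\}$ when $n$ is even and $D=\{m_{12}, m_{34}, \dots, m_{(n-2)(n-1)}\}\cup\{m_{0(n-1)}\}$ when $n$ is odd; in each case $|D|=\lceil n/2\rceil$. Note also that your alternative ``add the hub $v_0$'' works only when $n$ is odd: for $n$ even the rim has odd length $n-1$, the alternating rim-matching leaves the rim vertex $v_{n-1}$ undominated, and $v_0$ is not adjacent to $v_{n-1}$ in $M(W_n)$, so a spoke is genuinely needed there. No separate treatment of $n=4,5$ is required; the construction already covers them (and $W_4=K_4$ is consistent with Theorem \ref{theo:outmindomincomplete}).
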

\begin{proof}
	To fix the notation, assume $V(W_n)=V=\{v_0,v_1,\dots, v_{n-1}\}$ and $E(W_n)=\{v_0v_1,v_0v_2,\dots, v_0v_{n-1}\}\cup\{v_1v_2, v_2v_3,\dots,v_{n-1}v_1\}$. Then we have $V(M(W_n))=V(W_n)\cup \mathcal{M}$, where $\mathcal{M}=\{ m_{0i}~|~1\leq i \leq n-1 \}\cup\{ m_{i(i+1)}~|~1\leq i \leq n-2 \}\cup\{m_{1(n-1)}\}$. 
	Assume that $n$ is even and consider $D=\{m_{12}, m_{34}, \dots, m_{(n-3)(n-2)}\} \cup \{m_{0(n-1)}\}$. Then $D$ is an outer-connected dominating set of $M(G)$ with $|D|=\lceil n/2\rceil$. Similarly, if $n$ is odd, consider $D=\{m_{12}, m_{34}, \dots, m_{(n-2)(n-1)}\} \cup \{m_{0(n-1)}\}$. Then $D$ is an outer-connected dominating set of $M(G)$ with $|D|=\lceil n/2\rceil$. This show that $\gamma{\tilde{}} _{c}(M(W_{n}))\leq \lceil n/2\rceil$. On the other hand, by Theorem \ref{theo:mindomintree}, $\gamma{\tilde{}} _{c}(M(W_{n}))\geq \lceil n/2\rceil .$
\end{proof}
\begin{Theorem}\label{outcompletebipartitegr}
	Let $K_{n_1,n_2}$ be the complete bipartite graph with $n_2\geq n_1 \geq 2$. Then $$\gamma{\tilde{}} _{c}(M(K_{n_1,n_2}))= n_2.$$
\end{Theorem}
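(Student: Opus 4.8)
The plan is to exhibit an explicit outer-connected dominating set of size $n_2$ and then prove a matching lower bound by a counting argument analogous to the one used for trees in Theorem~\ref{theo:outmindomintreeneqstar}. To fix notation, write the two sides of the bipartition as $A=\{a_1,\dots,a_{n_1}\}$ and $B=\{b_1,\dots,b_{n_2}\}$, so that $E(K_{n_1,n_2})=\{a_ib_j\}$ and $V(M(K_{n_1,n_2}))=A\cup B\cup\mathcal{M}$ with $\mathcal{M}=\{m_{ij}\mid a_ib_j\in E\}$. For the upper bound I would take $D=B$: every $m_{ij}$ is adjacent to $b_j\in D$, every $a_i$ is adjacent to some $m_{ij}$ whose other endpoint $b_j$ lies in $D$ (in fact $a_i$ is adjacent to $m_{i1},\dots,m_{in_2}$), so $D$ dominates; and $M(K_{n_1,n_2})-D$ has vertex set $A\cup\mathcal{M}$, which is connected because each $m_{ij}$ is joined to $a_i$, and any two edge-vertices $m_{ij},m_{i'j'}$ with $i\neq i'$ are joined through the common vertex $b_j$ — wait, $b_j\notin D^c$; instead they are adjacent in $L(G)$ whenever $j=j'$, and for $i\neq i'$ one routes $m_{ij}-a_i-m_{ij'}-\dots$, so connectivity holds. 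Hence $\gamma{\tilde{}}_c(M(K_{n_1,n_2}))\le n_2$.

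For the lower bound, let $D$ be any outer-connected dominating set and set $H=M(K_{n_1,n_2})-D$, which is connected. The key structural observation is that if some $a_i\in A$ and some $b_j\in B$ both lie in $H$, then $m_{ij}\in D$ (otherwise $m_{ij}\in H$ and $H$ contains the edge $a_im_{ij}b_j$, which is fine) — more precisely I would argue about which edge-vertices are forced into $D$. The cleaner route: for each vertex $x\in A\cup B$ with $x\notin D$, $x$ must be dominated, so some incident $m_{\cdot}$ lies in $D$; and connectivity of $H$ forces that the edge-vertices in $H$ together with the original vertices in $H$ stay connected, which restricts how many original vertices can be outside $D$. I would show that $|D\cap(A\cup\mathcal{M})|$ is large by the same "each $b_j\notin D$ consumes a private dominator" bookkeeping used in Theorem~\ref{theo:outmindomintreeneqstar}, combined with $|D\cap B|$ counting the $b_j$'s that are inside $D$, to conclude $|D|\ge n_2$.

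Concretely, I expect the lower-bound argument to split on how many $b_j$ lie outside $D$. If every $b_j\in D$ then $|D|\ge n_2$ immediately. Otherwise pick $b_{j_0}\notin D$; then $b_{j_0}$ is dominated by some $m_{i_0 j_0}\in D$, and moreover for $H$ to be connected one must be careful about the vertices $a_{i_0},\dots$ and their edges. The heart of the matter is: for each $b_j\notin D$, at least one edge-vertex $m_{ij}$ incident to $b_j$ lies in $D$; these dominators are distinct for distinct $j$ (since $m_{ij}$ is incident to a unique $b_j$), and none of them is one of the $b$'s in $D$, so $|D\cap\mathcal{M}|\ge |\{j:b_j\notin D\}|$, giving $|D|\ge |D\cap B|+|D\cap\mathcal{M}|\ge |\{j:b_j\in D\}|+|\{j:b_j\notin D\}|=n_2$. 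Combined with Theorem~\ref{theo:mindomintree} (or just this bound alone) we get $\gamma{\tilde{}}_c(M(K_{n_1,n_2}))=n_2$.

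The main obstacle I anticipate is not the counting — that mirrors the tree case — but verifying that the displayed set $D=B$ really leaves a connected complement, and dually making sure the lower-bound bookkeeping is not secretly undercounting (e.g. ensuring one does not need $m_{ij}\in D$ for *more* than one $j$, or that an $a_i$ outside $D$ doesn't force extra edge-vertices into $D$ in a way that would only help the inequality). I would therefore devote the bulk of the write-up to a clean proof that $A\cup\mathcal{M}$ induces a connected subgraph of $M(K_{n_1,n_2})$ and to the "private dominator" step, treating the small cases $n_1=n_2=2$ if necessary, and leave the arithmetic implicit.
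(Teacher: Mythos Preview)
Your lower-bound argument is correct and in fact coincides with the paper's: the closed neighbourhoods $N_{M(K_{n_1,n_2})}[b_j]=\{b_j\}\cup\{m_{ij}:1\le i\le n_1\}$ are pairwise disjoint for distinct $j$, so any dominating set must meet each of them, giving $|D|\ge n_2$. The outer-connected hypothesis plays no role here, and your worrying about it is unnecessary.

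The upper bound, however, has a genuine gap: the set $D=B$ is \emph{not} a dominating set of $M(K_{n_1,n_2})$. In the middle graph the original vertex $a_i$ is adjacent only to the edge-vertices $m_{i1},\dots,m_{in_2}$; it is \emph{not} adjacent to any $b_j$. Hence $N_{M(K_{n_1,n_2})}[a_i]\cap B=\emptyset$ and no $a_i$ is dominated. Your sentence ``every $a_i$ is adjacent to some $m_{ij}$ whose other endpoint $b_j$ lies in $D$'' describes a vertex at distance~$2$ from $D$, which is not the same as being dominated. So the displayed $D$ does not witness the inequality $\gamma{\tilde{}}_c(M(K_{n_1,n_2}))\le n_2$.

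The fix is to choose edge-vertices rather than original vertices. The paper takes
\[
D=\{m_{11},m_{22},\dots,m_{n_1n_1}\}\cup\{m_{n_1(n_1+1)},\dots,m_{n_1 n_2}\},
\]
a set of $n_2$ edge-vertices forming an edge cover of $K_{n_1,n_2}$. Each $a_i$ is then dominated by $m_{ii}$, each $b_j$ by $m_{jj}$ or $m_{n_1 j}$, and each remaining $m_{ij}$ by $m_{ii}$ (they share the endpoint $a_i$). Connectivity of the complement also needs checking, and here it does hold; that verification is the part you should focus on once you replace $B$ by an edge cover of this shape.
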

\begin{proof}  To fix the notation, assume $V(K_{n_1,n_2})=\{v_1,\dots, v_{n_1},u_1,\dots, u_{n_2}\}$ and $E(K_{n_1,n_2})=\{v_iu_j ~|~1\leq i \leq n_1, 1\leq j \leq n_2\}$. Then $V(M(K_{n_1,n_2}))=V(K_{n_1,n_2})\cup \mathcal{M}$, where
	$\mathcal{M}=\{ m_{ij}~|~1\leq i \leq n_1, 1\leq j \leq n_2 \}. $
Let $D$ be an outer-connected dominating set of $M(K_{n_1,n_2})$. 	
Since $D$ is a dominating set for $M(K_{n_1,n_2})$, it has to dominate $u_1,\dots, u_{n_2}$ that are all disconnected. This implies that $\gamma(M(K_{n_1,n_2}))\ge n_2$. Now since $D=\{m_{11},m_{22},\dots, m_{n_1n_1}\}\cup\{m_{n_1(n_1+1)}, m_{n_1(n_1+2)},\dots, m_{n_1n_2}\}$ is an outer-connected dominating set of $M(K_{n_1,n_2})$ with
	 $|D|=n_1+n_2-n_1=n_2$, This implies that 
	 $\gamma{\tilde{}} _{c}(M(K_{n_1,n_2}))= n_2.$
\end{proof}




\begin{Theorem}\label{outfriendship}
	Let $F_n$ be the friendship graph with $n\ge2$. Then $$\gamma{\tilde{}} _{c}(M(F_n))= n+1.$$
\end{Theorem}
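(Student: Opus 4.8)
The plan is to combine the general lower bound of Theorem \ref{theo:mindomintree} with one explicitly constructed outer-connected dominating set of cardinality $n+1$. First I would fix notation for the friendship graph: write $V(F_n)=\{v_0\}\cup\{x_1,y_1,\dots,x_n,y_n\}$, where $v_0$ is the central vertex shared by all the triangles and the $i$-th triangle is the one on $\{v_0,x_i,y_i\}$, with edges $v_0x_i$, $v_0y_i$ and $x_iy_i$. Then $F_n$ is connected with $|V(F_n)|=2n+1$, and in the notation of the paper $V(M(F_n))=V(F_n)\cup\mathcal{M}$ where $\mathcal{M}=\{m_{0x_i},m_{0y_i},m_{x_iy_i}\mid 1\le i\le n\}$. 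I would also record the adjacencies of $M(F_n)$ that matter: the $2n$ vertices $m_{0x_i},m_{0y_i}$ $(1\le i\le n)$ pairwise form a clique because the edges $v_0x_i,v_0y_i$ of $F_n$ all pass through $v_0$; each $m_{x_iy_i}$ is adjacent to exactly $m_{0x_i}$, $m_{0y_i}$, $x_i$ and $y_i$; and $v_0$ is adjacent to every $m_{0x_i}$ and $m_{0y_i}$.

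For the lower bound there is nothing to do beyond quoting the preliminaries: since $F_n$ is a connected graph on $2n+1\ge2$ vertices, Theorem \ref{theo:mindomintree} yields $\gamma{\tilde{}} _{c}(M(F_n))\ge\lceil(2n+1)/2\rceil=n+1$.

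For the upper bound I would exhibit $D=\{v_0\}\cup\{m_{x_iy_i}\mid 1\le i\le n\}$, which has $n+1$ elements, and verify the two defining conditions. Domination is immediate: $v_0\in D$; each $x_i$ and each $y_i$ is dominated by $m_{x_iy_i}\in D$; each $m_{0x_i}$ and each $m_{0y_i}$ is dominated by $v_0\in D$; and each $m_{x_iy_i}$ lies in $D$. For the outer-connectedness, the vertices of $M(F_n)-D$ are precisely $\{x_i,y_i,m_{0x_i},m_{0y_i}\mid 1\le i\le n\}$; the $2n$ vertices $m_{0x_i},m_{0y_i}$ induce a clique, and each $x_i$ (respectively $y_i$) is joined to that clique via $m_{0x_i}$ (respectively $m_{0y_i}$), so $M(F_n)-D$ is connected. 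Hence $\gamma{\tilde{}} _{c}(M(F_n))\le n+1$, and together with the lower bound this gives the claimed equality.

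The whole argument is routine; the only point requiring a little attention is the adjacency bookkeeping inside $M(F_n)$ — specifically the observation that all subdivision vertices of edges incident to the hub $v_0$ are mutually adjacent, which is exactly what keeps the complement of $D$ connected. I do not expect a genuine obstacle here: the hub together with the $n$ subdivided ``rim'' edges $x_iy_i$ is a natural candidate that is forced to be essentially optimal, and since the matching lower bound comes for free from the preliminaries one need not separately rule out cardinality $n$.
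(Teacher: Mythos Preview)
Your proof is correct and follows essentially the same approach as the paper: the lower bound comes from the general inequality $\gamma{\tilde{}} _{c}(M(G))\ge\lceil n/2\rceil$ applied with $n=2n+1$, and the upper bound is witnessed by the same set $D=\{v_0\}\cup\{m_{x_iy_i}\mid 1\le i\le n\}$. The only cosmetic difference is that the paper invokes Theorem~\ref{theo:mindominconnectgraph} directly for the lower bound rather than Theorem~\ref{theo:mindomintree}, but since the latter's lower bound is derived from the former this is immaterial; your verification of domination and outer-connectedness is in fact more detailed than the paper's.
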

\begin{proof} To fix the notation, assume $V(F_n)=\{v_0,v_1,\dots, v_{2n}\}$ and $E(F_n)=\{v_0v_1,v_0v_2,\dots, v_0v_{2n}\}\cup\{v_1v_2, v_3v_4,\dots,v_{2n-1}v_{2n}\}$. Then $V(M(F_n))=V(F_n)\cup \mathcal{M}$, where $\mathcal{M}=\{ m_i~|~1\leq i \leq 2n \}\cup\{ m_{i(i+1)}~|~1\leq i \leq 2n-1 \text{ and } i \text{ is odd}\}$.

By Theorem~\ref{theo:mindominconnectgraph}, $\gamma(M(F_n))\ge \lceil\frac{2n+1}{2} \rceil=n+1$. Now since  $D=\{ m_{i(i+1)}~|~1\leq i \leq 2n-1 \text{ and } i \text{ is odd}\}\cup\{v_0\}$ is an outer-connected dominating set for $M(F_n)$ with $|D|=n+1$, we have $\gamma{\tilde{}} _{c}(M(F_n))= n+1.$  
\end{proof}

Putting together Theorems \ref{theomoutdomcycle} and \ref{gammacMG}, we have the following result.
\begin{Corollary}
	There exists a connected graph $G$ of order $n \geq 3$ such that
	$$\gamma _{c}(M(G))=\gamma{\tilde{}} _{c}(M(G)).$$
\end{Corollary}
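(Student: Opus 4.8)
The plan is to exhibit a single graph that realizes the claimed equality, and the natural candidate is the cycle $C_n$. By Theorem~\ref{gammacMG}, since $C_n$ is a connected graph of order $n\ge 3$, we have $\gamma_c(M(C_n))=n-1$. By Theorem~\ref{theomoutdomcycle}, we also have $\gamma{\tilde{}}_c(M(C_n))=n-1$. Hence $\gamma_c(M(C_n))=\gamma{\tilde{}}_c(M(C_n))=n-1$, which proves the corollary with $G=C_n$.

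So the proof is essentially a one-line synthesis: the corollary follows immediately by comparing the two previously established theorems evaluated on the cycle. First I would state that it suffices to produce one example; then I would invoke Theorem~\ref{gammacMG} to get $\gamma_c(M(C_n))=n-1$ and Theorem~\ref{theomoutdomcycle} to get $\gamma{\tilde{}}_c(M(C_n))=n-1$; then I would conclude.

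There is no real obstacle here, since both ingredients are already proved in the excerpt. The only thing to be slightly careful about is the quantifier in the statement: the corollary asserts the existence of such a graph for each order $n\ge 3$ (or at least for infinitely many $n$), and the cycle $C_n$ handles every $n\ge 3$ simultaneously, so the ``for order $n\ge 3$'' phrasing is fully justified. If one wanted to be thorough, one could also remark that this shows the family of cycles gives a uniform family of examples, but that is not needed for the proof itself.

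\begin{proof}
It is enough to exhibit a connected graph of order $n\ge 3$ for which the two invariants coincide. Take $G=C_n$, the cycle of order $n\ge 3$. By Theorem~\ref{gammacMG}, $\gamma_c(M(C_n))=n-1$, and by Theorem~\ref{theomoutdomcycle}, $\gamma{\tilde{}}_c(M(C_n))=n-1$. Therefore
$$\gamma_c(M(C_n))=\gamma{\tilde{}}_c(M(C_n))=n-1,$$
which proves the claim.
\end{proof}
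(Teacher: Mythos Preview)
Your proof is correct and follows exactly the same approach as the paper: the corollary is stated immediately after Theorems~\ref{theomoutdomcycle} and~\ref{gammacMG} with the remark that it follows by putting these two together, i.e., by taking $G=C_n$.
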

\begin{Remark}
Comparing Theorem \ref{gammacMG} and Theorem \ref{theo:outmindomintreeneqstar} we conclude that for any tree $T$ we have
$$\gamma _{c}(M(T))<\gamma{\tilde{}} _{c}(M(T)).$$ 

Similarly, comparing Theorems \ref{gammacMG} and \ref{theomoutdomcycle}  we conclude that for any cycle $$\gamma _{c}(M(C_n))=\gamma{\tilde{}} _{c}(M(C_n)).$$

Finally, comparing Theorem \ref{gammacMG} and Theorems \ref{theo:outmindomincomplete} and \ref{OUTERWN} we conclude that $$\gamma{\tilde{}} _{c}(M(K_n))=\gamma{\tilde{}} _{c}(M(W_n))<\gamma _{c}(M(K_n))=\gamma _{c}(M(W_n)).$$ 

As a consequence, if $G$ be a connected graph of order $n$, then one may not conclude that
\[
\gamma _{c}(M(G)) \geq \gamma{\tilde{}} _{c}(M(G))~~~~~\mbox{or}~~~~~~\gamma _{c}(M(G)) \leq \gamma{\tilde{}} _{c}(M(G)).
\]
\end{Remark}

\section{Operation on graphs}

In this section, we study the outer-connected domination number for the middle graph of the corona, $2$-corona and other types of graphs.
\begin{Definition}
	The \emph{corona} $G\circ K_1$ of a graph $G$ is the graph of order $2|V(G)|$ obtained from $G$ by adding a pendant edge to each vertex of $G$. The \emph{$2$-corona} $G\circ P_2$ of $G$ is the graph of order $3|V(G)|$ obtained from $G$ by attaching a path of length $2$ to each vertex of $G$ so that the resulting paths are vertex-disjoint.
\end{Definition}

\begin{Theorem}\label{theo:minoutdomincorona}
	For any connected graph $G$ of order $n\geq 2$, $$n+\lceil n/2 \rceil \leq \gamma{\tilde{}} _{c}(M(G\circ K_1)) \leq 2n.$$
\end{Theorem}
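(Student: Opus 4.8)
The plan is to prove the two inequalities separately, using the structure of $M(G \circ K_1)$ explicitly. Fix $V(G) = \{v_1, \dots, v_n\}$ and let $u_i$ be the pendant vertex attached to $v_i$, so that $e_i = v_iu_i \in E(G\circ K_1)$. Then $V(M(G\circ K_1)) = V(G) \cup \{u_1,\dots,u_n\} \cup \mathcal{M}$, where $\mathcal{M}$ contains a vertex $m_i$ subdividing each pendant edge $v_iu_i$, together with vertices $m_{ij}$ subdividing each original edge $v_iv_j \in E(G)$. The $u_i$ are all pairwise non-adjacent in $M(G\circ K_1)$, and the only neighbor of $u_i$ is $m_i$.

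For the lower bound, let $D$ be any outer-connected dominating set. First I would argue, exactly as in the proof of Theorem~\ref{theo:outmindomintreeneqstar}, that for each $i$ either $u_i \in D$ or else $u_i$ would be an isolated vertex of $M(G\circ K_1) - D$ (since its unique neighbor $m_i$ must then lie in $D$); connectedness of the complement forces $u_i \in D$ for all $i$, giving $n$ vertices. Next, since each $m_i$ must be dominated and $N[m_i] = \{m_i, v_i, u_i\} \cup \{m_{ij} : v_iv_j \in E(G)\}$, and the $m_{ij}$'s, $v_i$'s are of bounded overlap, I would count how many additional vertices of $D$ are needed to dominate the set $\{m_i : 1 \le i \le n\}$; because the closed neighborhoods $N[m_i]$ restricted to $V(G) \cup \mathcal{M}$ meet in controlled ways (two $m_i$'s share a dominator only via some $m_{ij}$, which covers at most two of them), a matching-type argument shows at least $\lceil n/2 \rceil$ further vertices are required, yielding $|D| \ge n + \lceil n/2 \rceil$.

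For the upper bound, I would exhibit an explicit outer-connected dominating set of size $2n$. Take $D = \{u_1,\dots,u_n\} \cup \{m_1,\dots,m_n\}$. Each $u_i$ dominates itself; each $m_i$ dominates $v_i$; and each $m_{ij}$ is dominated by $m_i$. So $D$ is dominating. For the complement: $M(G\circ K_1) - D$ has vertex set $V(G) \cup \{m_{ij}\}$, and since $G$ is connected, the subgraph induced on $V(G) \cup \{m_{ij}\}$ (each $v_iv_j \in E(G)$ is replaced by the path $v_i - m_{ij} - v_j$) is connected — it is essentially the subdivision of $G$. Hence $D$ is outer-connected, so $\gamma{\tilde{}}_{c}(M(G\circ K_1)) \le 2n$.

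The main obstacle is the lower bound, specifically the combinatorial counting that forces $\lceil n/2 \rceil$ dominators beyond the $u_i$'s; one must be careful that a vertex $m_{ij} \in D$ dominates $m_i$ and $m_j$ simultaneously (and nothing else among the $m_k$'s), while a vertex $v_i \in D$ dominates only $m_i$ among them, so the optimal way to cover $\{m_1,\dots,m_n\}$ is by a maximal set of disjoint $m_{ij}$'s supplemented by $v_i$'s or $m_i$'s for the leftover indices — this is exactly an edge-cover/matching lower bound giving $\lceil n/2\rceil$. One should also double-check the boundary behavior for small $n$ and confirm that adding these dominators cannot disconnect the complement further, but that is routine once the count is in place.
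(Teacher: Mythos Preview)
Your upper bound is fine; the paper uses a different witness, taking $D=V(G\circ K_1)$ (all $2n$ original vertices) so that the complement is $L(G\circ K_1)$, connected because $G\circ K_1$ is. Your choice $D=\{u_1,\dots,u_n\}\cup\{m_1,\dots,m_n\}$ works equally well.

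The lower bound, however, has a genuine gap. You correctly force $\{u_1,\dots,u_n\}\subseteq D$, but then you propose to count the additional dominators needed for the set $\{m_1,\dots,m_n\}$. This fails immediately: each $m_i$ is \emph{already} dominated, since $u_i\in D$ and $u_i\in N[m_i]$. So the requirement that the $m_i$'s be dominated forces nothing further, and your matching/edge-cover count never gets off the ground.

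What the paper does instead is observe that no $u_i$ is adjacent to any vertex of $V(M(G))=V(G)\cup\{m_{jk}:v_jv_k\in E(G)\}$. Hence $D\setminus\{u_1,\dots,u_n\}$ must still dominate all of $V(M(G))$ inside $M(G\circ K_1)$, and since $M(G)$ sits as an induced subgraph (with any $m_i$ used as a dominator replaceable by $v_i$), this forces $|D|-n\geq\gamma(M(G))\geq\lceil n/2\rceil$ by Theorem~\ref{theo:mindominconnectgraph}. The right target for your counting argument is therefore the set $\{v_1,\dots,v_n\}$ (or all of $V(M(G))$), not the set $\{m_1,\dots,m_n\}$; if you redirect your edge-cover reasoning to cover the $v_i$'s, it does give exactly $\lceil n/2\rceil$.
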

\begin{proof} 
	Assume $V(G)=\{v_1,\dots, v_{n}\}$, then $V(G\circ K_1)= \{v_{1},\dots, v_{2n}\}$ and $E(G\circ K_1)=\{v_1v_{n+1},\dots, v_nv_{2n} \}\cup E(G) $. As a consequence, $V(M(G\circ K_1))=V(G\circ K_1)\cup \mathcal{M}$, where $\mathcal{M}=\{ m_{i(n+i)}~|~1\leq i \leq n \}\cup \{ m_{ij}~|~v_iv_j\in  E(G)\}$.
	Since $\{v_1,\dots, v_{2n}\}$ is an outer-connected dominating set of $M(G\circ K_1)$, we have $\gamma{\tilde{}} _{c}(M(G\circ K_1)) \leq 2n$.

	 Let $D$ be an outer-connected dominating set of $M(G\circ K_1)$. 
	 Assume $v_{n+i} \notin D$ for some $1 \le i \le n$, then since $D$ is a dominating set of $M(G\circ K_1)$ this implies that $m_{i(n+i)} \in D$ and so $M(G\circ K_1)-D$ is disconnected, which is a contradiction. As a consequence $D_1=\{v_{n+1},\dots, v_{2n}\}\subseteq D$. Now since $N_{M(G\circ K_1)}[v] \cap D_{1}=\emptyset$ for all $v \in V(M(G))$, by Theorem \ref{theo:mindominconnectgraph}, we have $$\gamma{\tilde{}} _{c}(M(G\circ K_1)) \geq n+\gamma(M(G)) \geq n+\lceil n/2 \rceil.$$

\end{proof}
\begin{Remark}
The upper bound in Theorem \ref{theo:minoutdomincorona} is tight. In fact, when $G$ is a tree, then $G\circ K_1$ is also a tree and so $\gamma{\tilde{}} _{c}(M(G\circ K_1))=2n$ by Theorem \ref{theo:outmindomintreeneqstar}.

Moreover, also the lower bound in Theorem \ref{theo:minoutdomincorona} is tight. To see it, consider $G=K_n$ and 
$$D=\{v_{n+i}~|~1 \leq i \leq n\} \cup \{m_{12}, m_{34}, \dots, m_{(n-1)n}\}$$	when $n$ is even, and 
$$D=\{v_{n+i}~|~1 \leq i \leq n\} \cup \{m_{12}, m_{34}, \dots, m_{(n-2)(n-1)} m_{(n-1)n}\}$$	when $n$ is odd. In each case, $D$ is an outer-connected dominating set of $M(K_n\circ K_1)$ with $|D|=n+\lceil n/2 \rceil$.
\end{Remark}

Similarly to Theorem \ref{theo:minoutdomincorona}, we can describe lower and upper bounds for the outer-connected domination number of the middle graph of a $2$-corona graph.
\begin{Theorem}\label{theominoutdomin2corona}
	For any connected graph $G$ of order $n\geq 2$, $$2n+\lceil n/2 \rceil \leq \gamma{\tilde{}} _{c}(M(G\circ P_2)) \leq 3n.$$
\end{Theorem}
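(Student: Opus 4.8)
The plan is to follow the strategy of the proof of Theorem~\ref{theo:minoutdomincorona}; the extra work comes from the pendant \emph{paths} (rather than pendant edges) of $G\circ P_2$. Set $V(G)=\{v_1,\dots,v_n\}$, so that $V(G\circ P_2)=\{v_1,\dots,v_{3n}\}$ with the path attached to $v_i$ being $v_i\,v_{n+i}\,v_{2n+i}$, and $E(G\circ P_2)=\{v_iv_{n+i},\,v_{n+i}v_{2n+i}\mid 1\le i\le n\}\cup E(G)$; hence $V(M(G\circ P_2))=V(G\circ P_2)\cup\mathcal M$ with $\mathcal M=\{m_{i(n+i)},\,m_{(n+i)(2n+i)}\mid 1\le i\le n\}\cup\{m_{ij}\mid v_iv_j\in E(G)\}$. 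Writing $a_i=m_{i(n+i)}$, $b_i=v_{n+i}$, $c_i=m_{(n+i)(2n+i)}$, $d_i=v_{2n+i}$ and $A=\{a_i\}$, $B=\{b_i\}$, $C=\{c_i\}$, $D_1=\{d_i\}$, one checks that $V(M(G\circ P_2))$ is the disjoint union of $V(M(G)),A,B,C,D_1$, that the subgraph of $M(G\circ P_2)$ induced on $V(M(G))$ is exactly $M(G)$, and from the incidences of $G\circ P_2$ that $N_{M(G\circ P_2)}(b_i)=\{a_i,c_i\}$, $N_{M(G\circ P_2)}(c_i)=\{a_i,b_i,d_i\}$, $N_{M(G\circ P_2)}[d_i]=\{d_i,c_i\}$, $N_{M(G\circ P_2)}[a_i]\cap V(M(G))=N_{M(G)}[v_i]$, and $N_{M(G\circ P_2)}[u]\subseteq V(M(G))\cup A$ for all $u\in V(M(G))$. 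The upper bound is immediate: $D=V(G\circ P_2)$ is dominating and $M(G\circ P_2)-D$ is the line graph of the connected graph $G\circ P_2$, hence connected, so $\gamma{\tilde{}} _{c}(M(G\circ P_2))\le 3n$ (equivalently, apply Theorem~\ref{theo:mindomintree} to $G\circ P_2$, which has order $3n$).

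For the lower bound I would first dispose of degenerate cases: if $G$ is a tree then $G\circ P_2$ is a tree of order $3n$, so $\gamma{\tilde{}} _{c}(M(G\circ P_2))=3n\ge 2n+\lceil n/2\rceil$ by Theorem~\ref{theo:outmindomintreeneqstar}; so assume $G$ is not a tree. Let $D$ be any outer-connected dominating set of $M(G\circ P_2)$. If $|V(M(G\circ P_2))\setminus D|\le 2$, then $|D|\ge|V(M(G\circ P_2))|-2\ge 5n-2\ge 2n+\lceil n/2\rceil$ (as $|\mathcal M|\ge 2n$), so I may assume $|V(M(G\circ P_2))\setminus D|\ge3$. \emph{Step 1 (leaves)}. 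Since $N_{M(G\circ P_2)}[d_i]=\{d_i,c_i\}$, if some $d_i\notin D$ then $c_i\in D$ and $d_i$ is isolated in $M(G\circ P_2)-D$, contradicting connectedness; hence $D_1\subseteq D$. \emph{Step 2 (a transversal)}. I claim $D\cap\{b_i,c_i\}\ne\emptyset$ for every $i$. If not, then $b_i,c_i\notin D$; since $D$ dominates $b_i$ and $N_{M(G\circ P_2)}[b_i]=\{b_i,a_i,c_i\}$, we must have $a_i\in D$. But then, as $d_i\in D$ as well, the only edge of $M(G\circ P_2)-D$ meeting $\{b_i,c_i\}$ is $b_ic_i$, so $\{b_i,c_i\}$ is a connected component of $M(G\circ P_2)-D$; since the latter has at least three vertices, it is disconnected — a contradiction. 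Picking $t_i\in D\cap\{b_i,c_i\}$ for each $i$ gives $T=\{t_1,\dots,t_n\}\subseteq D\cap(B\cup C)$ with $|T|=n$.

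\emph{Step 3 (domination of $M(G)$)}. Let $S=D\cap(V(M(G))\cup A)$. Any $u\in V(M(G))$ is dominated in $M(G\circ P_2)$ by some $w\in D$, and $w\in N_{M(G\circ P_2)}[u]\subseteq V(M(G))\cup A$ forces $w\in S$. Replacing each $a_i\in S$ by $v_i$ then gives a dominating set of $M(G)$ of size at most $|S|$, using $N_{M(G\circ P_2)}[a_i]\cap V(M(G))=N_{M(G)}[v_i]$ and the fact that $M(G)$ is induced in $M(G\circ P_2)$; hence $|S|\ge\gamma(M(G))\ge\lceil n/2\rceil$ by Theorem~\ref{theo:mindominconnectgraph}. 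Since $D_1$, $T$, $S$ are pairwise disjoint subsets of $D$ (they are contained in $D_1$, $B\cup C$, $V(M(G))\cup A$, which are pairwise disjoint), we conclude $|D|\ge|D_1|+|T|+|S|\ge n+n+\lceil n/2\rceil=2n+\lceil n/2\rceil$. The crux I anticipate is Step~2 combined with the observation in Step~3 that every dominator of a vertex of $M(G)$ lies in $V(M(G))\cup A$: this is precisely what lets the transversal $T$ be chosen inside $B\cup C$, disjoint from $S$, so that no vertex of $D$ is double-counted. A plain imitation of the corona argument would yield only one forced vertex per pendant path, and since that vertex could be an $a_i$ already lying in $S$, the resulting estimate would fall short of the claimed bound.
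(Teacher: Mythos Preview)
Your proof is correct and follows essentially the same strategy as the paper's: both show that every leaf $d_i$ lies in $D$, that each pendant path contributes a further vertex from $\{b_i,c_i\}=\{v_{n+i},m_{(n+i)(2n+i)}\}$ to $D$, and that the remaining part of $D$ must dominate $V(M(G))$, giving the three summands $n+n+\lceil n/2\rceil$. Your write-up is in fact more careful than the paper's on two points: you explicitly justify the replacement $a_i\mapsto v_i$ needed to conclude $|S|\ge\gamma(M(G))$ (the paper jumps directly from ``$D_1$ has no neighbours in $V(M(G))$'' to ``$|D|\ge 2n+\gamma(M(G))$'' without addressing that $D\setminus D_1$ may contain vertices $a_i\notin V(M(G))$), and you handle the degenerate case $|V(M(G\circ P_2))\setminus D|\le 2$ so that the disconnection arguments in Steps~1--2 are airtight.
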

\begin{proof} 
	Assume $V(G)=\{v_1,\dots, v_n\}$, then $V(G\circ P_2)= \{v_{1},\dots, v_{3n}\}$ and $E(G\circ P_2)=\{v_iv_{n+i}, v_{n+i}v_{2n+i}~|~1\leq i \leq n \}\cup E(G) $. As a consequence, we have that $V(M(G\circ P_2))=V(G\circ P_2)\cup \mathcal{M}$, where $\mathcal{M}=\{ m_{i(n+i)},m_{(n+i)(2n+i)}~|~1\leq i \leq n \}\cup \{ m_{ij}~|~v_iv_j\in  E(G)\}$. 
	Since $\{v_1,\dots, v_{3n}\}$ is an outer-connected dominating set of $M(G\circ P_2)$, we have $\gamma{\tilde{}} _{c}(M(G\circ P_2)) \leq 3n$.
	
	 Let $D$ be an outer-connected dominating set of $M(G\circ P_2)$. To prove first inequality, we claim that $$|D_1|=|\{v_{n+i},v_{2n+i},m_{(n+i)(2n+i)}~|~1\leq i \leq n \} \cap D | \ge 2n$$
	 Assume $v_{2n+i} \notin D$ for some $1 \le i \le n$. Since $D$ is a dominating set of $M(G\circ P_2)$ this implies that $m_{(n+i)(2n+i)} \in D$ and so $M(G\circ P_2)-D$ is disconnected, which is a contradiction. Hence $\{v_{2n+i}~|~1\leq i \leq n \} \subseteq D$. Now assume  $v_{n+i} \notin D$ for some $1 \le i \le n$. Since $D$ is a dominating set of $M(G\circ P_2)$ this implies that $m_{(n+i)(2n+i)} \in D$ or $m_{i(n+i)} \in D$. If $m_{i(n+i)} \in D$, then $M(G\circ P_2)-D$ is disconnected, which is a contradiction, and hence $m_{(n+i)(2n+i)}\in D$. This shows that for every $1 \le i \le n$, we have that $v_{n+i} \in D$ or $m_{(n+i)(2n+i)}\in D$.
	 Now since by construction of $D_1$, we have that $N_{M(G\circ P_2)}[v] \cap D_{1}=\emptyset$ for all $v \in V(M(G))$, this implies $$\gamma{\tilde{}} _{c}(M(G\circ P_2)) \geq 2n+\gamma(M(G)) \geq 2n+\lceil n/2 \rceil$$ by Theorem \ref{theo:mindominconnectgraph}.

\end{proof}
\begin{Remark}
	The upper bound in Theorem \ref{theominoutdomin2corona} is tight. This is because when $G$ is a tree, then also $G\circ P_2$ is a tree and hence $\gamma{\tilde{}} _{c}(M(G\circ P_2))=3n$ by Theorem \ref{theo:outmindomintreeneqstar}.

	Moreover, also the lower bound in Theorem \ref{theominoutdomin2corona} is tight. Consider $G=K_n$, and
	$$D=\{v_{n+i},v_{2n+i}~|~1 \leq i \leq n\} \cup \{m_{12}, m_{34}, \dots, m_{(n-1)n}\}$$	when $n$ is even, and 
	$$D=\{v_{n+i},v_{2n+i}~|~1 \leq i \leq n\} \cup \{m_{12}, m_{34}, \dots, m_{(n-2)(n-1)} m_{(n-1)n}\}$$	when $n$ is odd. In both cases, $D$ is an outer-connected dominating set of $M(K_n\circ P_2)$ with $|D|=2n+\lceil n/2 \rceil$.
\end{Remark}

In the next two theorems, we study the outer-connected domination number of the middle graph of the join of a graph with $\overline{K_p}$. 
\begin{Theorem}\label{theomindominjoinpbig}
	For any connected graph $G$ of order $n\geq 2$ and any integer $p\geq n$, 
	 $$\gamma{\tilde{}} _{c}(M(G+\overline{K_p}))=p.$$
\end{Theorem}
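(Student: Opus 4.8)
First I would set up notation. Write $V(G)=\{v_1,\dots,v_n\}$ and let $\overline{K_p}$ have vertex set $\{w_1,\dots,w_p\}$, so that in $G+\overline{K_p}$ every $v_i$ is joined to every $w_k$ while the $w_k$ remain pairwise nonadjacent. Then $V(M(G+\overline{K_p}))=V(G)\cup\{w_1,\dots,w_p\}\cup\mathcal{M}$, where $\mathcal{M}$ splits as $\mathcal{M}_G=\{m_{ij}\mid v_iv_j\in E(G)\}$ together with the ``join'' vertices $\mathcal{M}_J=\{m_{ik}\mid 1\le i\le n,\ 1\le k\le p\}$ subdividing the edges $v_iw_k$.

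For the upper bound $\gamma{\tilde{}}_c(M(G+\overline{K_p}))\le p$, I would exhibit an explicit outer-connected dominating set of size $p$. The natural candidate is $D=\{w_1,\dots,w_p\}$: each $w_k$ is isolated in $G+\overline{K_p}$ except through its incident edges, so in $M(G+\overline{K_p})$ the set $D$ dominates every $m_{ik}\in\mathcal{M}_J$ (each such vertex is adjacent to $w_k$); each $v_i$ is dominated since it is incident to $v_iw_1$, i.e. adjacent to $m_{i1}$... wait — $m_{i1}\notin D$; rather $v_i$ is adjacent to $m_{ik}$ for all $k$, none in $D$. So I should instead take $D=\{m_{1k}\mid 1\le k\le p\}$ if $n\ge1$: then each $w_k$ is dominated (adjacent to $m_{1k}$), $v_1$ is dominated, and I still need to dominate $v_2,\dots,v_n$ and $\mathcal M_G$. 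Since $p\ge n$, a cleaner choice is $D=\{m_{ik}\mid k=i,\ 1\le i\le n\}\cup\{m_{1k}\mid n<k\le p\}$, of size $p$: the diagonal part dominates each $v_i$ and each $w_k$ for $k\le n$ and (via line-graph adjacencies in $\mathcal M_J$) every vertex of $\mathcal M_G$ and of $\mathcal M_J$; the tail dominates the remaining $w_k$. One then checks $M(G+\overline{K_p})-D$ is connected: all surviving vertices $v_i$, $w_k$, and $m\in\mathcal M$ are mutually linked through the richly connected line-graph structure on $\mathcal M_J$ (any two $m_{ik},m_{jl}$ with $i=j$ or $k=l$ are adjacent), which survives because at most one vertex per row/relevant column was deleted. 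This connectivity check is routine but should be written carefully.

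For the lower bound $\gamma{\tilde{}}_c(M(G+\overline{K_p}))\ge p$, let $D$ be any outer-connected dominating set. I would argue exactly as in the tree/bipartite proofs: the $p$ vertices $w_1,\dots,w_p$ must each be dominated, and the only neighbors of $w_k$ in $M(G+\overline{K_p})$ are the $m_{ik}$ ($1\le i\le n$) together with $w_k$ itself. For each $k$, either $w_k\in D$ or some $m_{ik}\in D$. If $w_k\notin D$ for some $k$, pick the corresponding $m_{ik}\in D$; I claim this forces another deletion ``downstream'' or contradicts connectivity — here is where I must be careful: deleting $m_{ik}$ alone does not disconnect $M-D$ because $w_k$ can still be reached through... nothing, $w_k$'s only neighbors are $m_{1k},\dots,m_{nk}$. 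So if $m_{ik}\in D$ but $w_k\notin D$, then in $M-D$ the vertex $w_k$ is connected only via the other $m_{jk}$ with $j\ne i$; for $M-D$ to be connected we need at least one such $m_{jk}\notin D$. So the bound is not simply ``one per column.'' The main obstacle is thus the counting argument for the lower bound, and the right bookkeeping is: let $A=\{k\mid w_k\in D\}$ and for $k\notin A$ the column must contain a deleted $m_{ik}$; combining with the domination requirements on $v_1,\dots,v_n$ and on $\mathcal M_G$, and using $p\ge n$, one should get $|D|\ge |A|+(p-|A|)=p$ by charging to each column $k$ either $w_k$ itself or a distinct $m_{ik}$ — distinctness across columns is automatic since $m_{ik}$ determines $k$. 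The only worry is a column $k\notin A$ whose charged vertex $m_{ik}$ is also needed to dominate $v_i$; but that does not reduce the count, it is still one deleted vertex charged to column $k$. So $|D|\ge p$, completing the proof. I would double-check the edge case $n=2$ and small $p$ separately, and make sure the connectivity verification in the upper bound genuinely uses $p\ge n$ (so that each ``column'' $k\le n$ loses only its diagonal entry and retains $m_{jk}$ for $j\ne k$, keeping $w_k$ attached).
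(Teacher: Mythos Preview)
Your proposal is correct and, after the initial false start with $\{w_1,\dots,w_p\}$, arrives at precisely the same explicit set $D=\{m_{ii}\mid 1\le i\le n\}\cup\{m_{1k}\mid n<k\le p\}$ that the paper uses for the upper bound. For the lower bound the paper takes a shortcut you do not: it simply quotes \cite[Theorem~2.15]{DSFBME} that $\gamma(M(G+\overline{K_p}))=p$ and uses $\gamma{\tilde{}}_{c}\ge\gamma$, whereas you reprove $|D|\ge p$ directly by observing that the closed neighbourhoods $N[w_k]$ are pairwise disjoint and must each meet $D$. That is exactly the argument underlying the cited result, so the two routes are essentially the same, yours being self-contained (and note that your lower-bound count never actually needs $p\ge n$, nor the outer-connectivity detour you worried through).

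One small correction to keep in mind when you write it up: your claim that $M-D$ remains connected \emph{through the line-graph structure on $\mathcal{M}_J$ alone} fails for $n=2$, since then the single surviving entry in row~$1$ of $\mathcal{M}_J$ (namely $m_{12}$) is isolated within $\mathcal{M}_J\setminus D$; connectivity is rescued only by routing through $v_1$, $v_2$, and the vertex of $\mathcal{M}_G$. You already flagged $n=2$ as an edge case to check, so this is just a confirmation that the check is genuinely needed there.
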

\begin{proof}
	Assume $V(G)=\{v_1,\dots,v_n\}$ and $V(\overline{K_p})=\{v_{n+1},\dots,v_{n+p}\}$. Then $V(M(G+\overline{K_p}))=V(G+\overline{K_p})\cup  \mathcal{M}_1\cup  \mathcal{M}_2$ where $\mathcal{M}_1= \{m_{ij}~|~v_iv_j\in  E(G)\}$ and $\mathcal{M}_2= \{m_{i(n+j)}~|~1\leq i \leq n, 1\leq j \leq p\}$.  
	
	By  \cite[Theorem 2.15]{DSFBME}, we have that $\gamma(M(G+\overline{K_p}))= p$, and hence $\gamma{\tilde{}} _{c}(M(G+\overline{K_p})) \geq \gamma(M(G+\overline{K_p}))= p$.
	
	On the other hand, if we consider $D=\{m_{i(n+i)}~|~1\leq i \leq n\}\cup \{m_{1(n+j)}~|~n+1\leq j \leq p\}$, then $D$ is an outer-connected dominating set of $M(G+\overline{K_p})$ with $|D|=p$, and hence $\gamma{\tilde{}} _{c}(M(G+\overline{K_p})) \le p$.
\end{proof}

\begin{Theorem}\label{theo:mindominjoinpsmallineq}
	For any connected graph $G$ of order $n\geq 2$ and any integer $p<n$, 
	$$\lceil\frac{n+p}{2}\rceil\le\gamma{\tilde{}} _{c}(M(G+\overline{K_p}))\le n.$$
\end{Theorem}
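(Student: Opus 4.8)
The plan is to prove the two inequalities separately, mirroring the structure of Theorem~\ref{theo:mindomintree}. For the upper bound, I would first fix notation as in the statement of Theorem~\ref{theomindominjoinpbig}: write $V(G)=\{v_1,\dots,v_n\}$, $V(\overline{K_p})=\{v_{n+1},\dots,v_{n+p}\}$, and decompose $V(M(G+\overline{K_p}))=V(G+\overline{K_p})\cup\mathcal{M}_1\cup\mathcal{M}_2$ with $\mathcal{M}_1=\{m_{ij}\mid v_iv_j\in E(G)\}$ and $\mathcal{M}_2=\{m_{i(n+j)}\mid 1\le i\le n,\ 1\le j\le p\}$. Then I would exhibit an explicit outer-connected dominating set of size $n$. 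The natural candidate is $D=V(G)=\{v_1,\dots,v_n\}$: it dominates every $m_{ij}\in\mathcal{M}_1$ and every $m_{i(n+j)}\in\mathcal{M}_2$ (each such edge-vertex is incident to some $v_i$ with $i\le n$) and every $v_{n+j}$ (since in $G+\overline{K_p}$ every $v_{n+j}$ is joined to all of $v_1,\dots,v_n$, so $m_{i(n+j)}$ is adjacent to $v_{n+j}$ — wait, $v_{n+j}$ itself is not dominated by $V(G)$ in $M$; rather $v_{n+j}$ is adjacent only to the $m_{i(n+j)}$). So I need to be careful: in $M(G+\overline{K_p})$ the vertex $v_{n+j}$ has neighbors exactly $\{m_{i(n+j)}\mid 1\le i\le n\}$, none of which lie in $V(G)$. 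Hence $V(G)$ alone is not dominating. Instead I would take $D=\{v_1,\dots,v_n\}$ and check what $M(G+\overline{K_p})-D$ looks like, or better, use a set like $D=\{m_{1(n+1)},\dots,m_{1(n+p)}\}\cup\{\text{something dominating the }v_i\}$; the count must come out to $n$, and since $p<n$ there is room. A clean choice: let $D$ consist of $v_2,\dots,v_n$ together with one edge-vertex $m_{1(n+1)}$ joining $v_1$ to $\overline{K_p}$, plus possibly more $m_{1(n+j)}$; I would tune this so that $|D|=n$, every vertex is dominated, and the complement $M(G+\overline{K_p})-D$ — which then consists of $v_1$, the remaining vertices of $\mathcal{M}_2$, all of $\mathcal{M}_1$, and the $v_{n+j}$ — is connected via the complete bipartite structure of $\mathcal{M}_2$ and the line-graph edges.

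For the lower bound $\gamma{\tilde{}}_c(M(G+\overline{K_p}))\ge\lceil(n+p)/2\rceil$, I would simply invoke Theorem~\ref{theo:mindominconnectgraph}: since $G+\overline{K_p}$ is a connected graph on $n+p\ge2$ vertices with no isolated vertex, $\gamma(M(G+\overline{K_p}))\ge\lceil(n+p)/2\rceil$, and every outer-connected dominating set is in particular a dominating set, so $\gamma{\tilde{}}_c(M(G+\overline{K_p}))\ge\gamma(M(G+\overline{K_p}))\ge\lceil(n+p)/2\rceil$. This half is immediate and requires no real work.

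The main obstacle is the upper bound: I must produce a set of size exactly $n$ that is simultaneously dominating and has connected complement, and the delicate point is that the $p$ vertices of $\overline{K_p}$ are pairwise non-adjacent in $M$ and each can only be dominated through its incident edge-vertices in $\mathcal{M}_2$. Because $p<n$, one expects to be able to "route" domination of all of $V(\overline{K_p})$ through a single apex vertex $v_1\in V(G)$ by putting the edge-vertices $m_{1(n+1)},\dots,m_{1(n+p)}$ into $D$ (that is $p$ vertices), then dominate $v_1$ and the structure of $G$ and the rest of $\mathcal{M}_2$ using the remaining budget of $n-p$ vertices from $V(G)\setminus\{v_1\}$ or from $\mathcal{M}_1$, all while keeping $M(G+\overline{K_p})-D$ connected through the surviving bipartite edges between $V(G)\setminus D$ and $V(\overline{K_p})$. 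I would verify connectivity of the complement by noting that any two surviving vertices can be joined by a path passing through the still-present portion of $\mathcal{M}_2$, which forms a connected "spine" as long as not all of $V(G)$ and not all of $V(\overline{K_p})$ are deleted — and with $|D|=n$ and the $p$ deleted edge-vertices accounting for part of $D$, at least one $v_i$ and all of $V(\overline{K_p})$ (or enough of $\mathcal{M}_2$) remain. Getting the bookkeeping to land on $n$ exactly, and confirming the complement stays connected in the edge cases (e.g.\ $G$ itself a tree, or $n-p$ small), is where the care is needed; the lower bound is essentially free.
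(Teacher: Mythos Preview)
Your lower bound is correct and matches the paper: it invokes Theorem~\ref{theo:mindomintree} (equivalently Theorem~\ref{theo:mindominconnectgraph}) applied to the connected graph $G+\overline{K_p}$ on $n+p$ vertices, and nothing more is needed.

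The upper bound, however, has a genuine gap. Your ``route everything through a single apex $v_1$'' plan cannot be completed within the budget $n$. If you place $\{m_{1(n+1)},\dots,m_{1(n+p)}\}$ in $D$, these $p$ edge--vertices dominate $v_1$ and $v_{n+1},\dots,v_{n+p}$, but they dominate \emph{none} of $v_2,\dots,v_n$: in $M(G+\overline{K_p})$ a vertex $v_i$ is adjacent to $m_{1(n+j)}$ only when $i\in\{1,n+j\}$. You are then left with $n-p$ slots to dominate the $n-1$ vertices $v_2,\dots,v_n$, each of which is isolated from the others in $M$ (no two original vertices are adjacent in a middle graph). This forces $|D|\ge p+(n-1)>n$ whenever $p\ge2$, so the construction fails in general. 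Your earlier candidate $D=\{v_2,\dots,v_n,m_{1(n+1)}\}$ also fails, since $v_{n+2},\dots,v_{n+p}$ remain undominated.

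The paper's fix is a one--line change of construction that you did not hit on: instead of concentrating the $\mathcal{M}_2$--vertices at $v_1$, spread them out as a \emph{matching},
\[
D=\{m_{i(n+i)}\mid 1\le i\le p\}\cup\{v_j\mid p+1\le j\le n\}.
\]
Now each $m_{i(n+i)}$ dominates \emph{two} original vertices, $v_i$ and $v_{n+i}$, so the first block already covers $v_1,\dots,v_p$ and all of $V(\overline{K_p})$; the second block covers $v_{p+1},\dots,v_n$; every edge--vertex is dominated through a shared endpoint; and $|D|=p+(n-p)=n$. Connectivity of $M(G+\overline{K_p})-D$ then follows from the surviving rook--like structure of $\mathcal{M}_2\setminus D$ together with $L(G)\subseteq\mathcal{M}_1$. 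The missing idea is precisely this diagonal pairing, which doubles the efficiency of the $\mathcal{M}_2$--vertices you spend.
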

\begin{proof} The first inequality follows directly from Theorem~\ref{theo:mindomintree}.
	On the other hand, using the same notation as in the proof of Theorem~\ref{theomindominjoinpbig}, if we consider $D=\{m_{i(n+i)}~|~1\le i\le p\}\cup\{v_j~|~p+1\le j\le n\}$, then $D$ is an outer-connected dominating set of $M(G+\overline{K_p})$ with $|D|=n$, and hence we obtain the second inequality.
\end{proof}

\begin{Remark} Both inequalities in Theorem~\ref{theo:mindominjoinpsmallineq} are sharp. In fact, if we consider $G=C_4$ and $p=2$, then a direct computation shows that $\gamma{\tilde{}} _{c}(M(C_4+\overline{K_2}))=3=\lceil\frac{n+p}{2}\rceil$. Similarly, if we consider $G=C_4$ and $p=3$, then $\gamma{\tilde{}} _{c}(M(C_4+\overline{K_3}))=4=n$. 
\end{Remark}
\section{Nordhaus-Gaddum-like relations}

Finding a Nordhaus-Gaddum-like relation for any parameter in graph theory is one of the traditional works which started after the following theorem by Nordhaus and Gaddum from \cite{Nordhaus}.

\begin{Theorem}[\cite{Nordhaus}]\label{Nordhaus and Gaddum}
	For any graph $G$ of order $n$, $2\sqrt{n}\leq \chi(G)+\chi(\overline{G}) \leq n+1$.
\end{Theorem}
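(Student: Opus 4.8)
The plan is to prove the two inequalities separately, since they rest on different ideas: the lower bound is a counting argument followed by the AM--GM inequality, whereas the upper bound is an induction on the order $n$.

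First I would handle the lower bound. The crucial observation is the multiplicative inequality $\chi(G)\cdot\chi(\overline{G})\ge n$. To establish it, fix a proper coloring of $G$ with $k=\chi(G)$ colors and a proper coloring of $\overline{G}$ with $\ell=\chi(\overline{G})$ colors, and map each vertex $v$ to the ordered pair formed by its color in $G$ and its color in $\overline{G}$. Two distinct vertices with the same pair would be non-adjacent both in $G$ and in $\overline{G}$, which is impossible since every two distinct vertices are adjacent in exactly one of $G$ and $\overline{G}$; hence the map is injective and $n\le k\ell$. Then AM--GM gives $\chi(G)+\chi(\overline{G})\ge 2\sqrt{\chi(G)\chi(\overline{G})}\ge 2\sqrt{n}$.

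Next I would prove $\chi(G)+\chi(\overline{G})\le n+1$ by induction on $n$, the base case $n=1$ being immediate since both terms equal $1$. For the step, choose any vertex $v$, write $d=d_G(v)$ so that $d_{\overline{G}}(v)=n-1-d$, and apply the inductive hypothesis to $G-v$, obtaining $\chi(G-v)+\chi(\overline{G-v})\le n$. Since an optimal coloring of $G-v$ extends to $G$ unless $v$ is adjacent to all of its color classes, we have: if $\chi(G)=\chi(G-v)+1$ then $d\ge\chi(G-v)$; symmetrically, if $\chi(\overline{G})=\chi(\overline{G-v})+1$ then $n-1-d\ge\chi(\overline{G-v})$. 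If at most one of the two chromatic numbers increases upon restoring $v$, then $\chi(G)+\chi(\overline{G})\le\chi(G-v)+\chi(\overline{G-v})+1\le n+1$. If both increase, then adding the two degree inequalities yields $\chi(G-v)+\chi(\overline{G-v})\le n-1$, and therefore $\chi(G)+\chi(\overline{G})\le (n-1)+2=n+1$. This closes the induction.

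The computations here are routine; the only delicate point is the vertex-deletion dichotomy used in the upper bound — that $\chi(G)=\chi(G-v)+1$ forces $d_G(v)\ge\chi(G-v)$ — together with combining the two cases so that the constant comes out as exactly $n+1$ rather than $n+2$. I expect that bookkeeping to be the main obstacle; everything else follows directly.
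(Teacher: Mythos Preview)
Your proof is correct and is essentially the classical argument due to Nordhaus and Gaddum. However, note that the paper does not actually supply its own proof of this theorem: it is stated with the citation \cite{Nordhaus} and used only as historical motivation for the Nordhaus--Gaddum-like relations that follow, so there is no in-paper argument to compare against. Your write-up matches the standard textbook proof (the product bound $\chi(G)\chi(\overline{G})\ge n$ via the injection into color pairs plus AM--GM for the lower bound, and the vertex-deletion induction for the upper bound), and every step you outline is sound.
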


In this section, we find Nordhaus-Gaddum-like relations for the outer-connected  domination number of middle graphs.
In particular, by Theorems \ref{theo:mindomintree} and \ref{gammacMG}, we have the following result.
\vskip 0.02cm

\begin{Corollary} \label{gammacandoutgamma}
	Let $G$ be a connected graph with $n\ge4$ vertices, where $G$ is not tree. Then $$n+\lceil\frac{n}{2}\rceil-1 \le \gamma _{c}(M(G))+\gamma{\tilde{}} _{c}(M(G))\le 2n-2,$$
	$$\lceil\frac{n}{2}\rceil(n-1) \le \gamma _{c}(M(G))\cdot\gamma{\tilde{}} _{c}(M(G))\le (n-1)^{2}.$$
\end{Corollary}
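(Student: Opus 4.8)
The plan is to combine the exact value $\gamma_c(M(G)) = n-1$ from Theorem \ref{gammacMG} with bounds on $\gamma{\tilde{}}_c(M(G))$, and to be careful about the hypothesis that $G$ is not a tree. First I would invoke Theorem \ref{gammacMG}: since $G$ is connected of order $n \ge 4 \ge 3$, we have $\gamma_c(M(G)) = n-1$ exactly. This pins down one factor (and one summand) completely, so both displayed chains reduce to establishing $\lceil n/2 \rceil \le \gamma{\tilde{}}_c(M(G)) \le n-1$. The lower bound $\lceil n/2 \rceil \le \gamma{\tilde{}}_c(M(G))$ is immediate from Theorem \ref{theo:mindomintree}. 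For the upper bound, the key point is that $G$ not being a tree is exactly what rules out the value $n$: by Theorem \ref{outGisnwhenGistree}, for a connected graph on $n \ge 4$ vertices, $\gamma{\tilde{}}_c(M(G)) = n$ holds if and only if $G$ is a tree; combined with the upper bound $\gamma{\tilde{}}_c(M(G)) \le n$ of Theorem \ref{theo:mindomintree}, we conclude $\gamma{\tilde{}}_c(M(G)) \le n-1$ whenever $G$ is not a tree.

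Having established $\lceil n/2 \rceil \le \gamma{\tilde{}}_c(M(G)) \le n-1$ and $\gamma_c(M(G)) = n-1$, the rest is routine arithmetic. For the additive relation, I would add $n-1$ to each term of $\lceil n/2 \rceil \le \gamma{\tilde{}}_c(M(G)) \le n-1$ to get
\[
n + \left\lceil \tfrac{n}{2} \right\rceil - 1 \le \gamma_c(M(G)) + \gamma{\tilde{}}_c(M(G)) \le 2n - 2.
\]
For the multiplicative relation, I would multiply each term of the same two-sided bound by the positive quantity $\gamma_c(M(G)) = n-1$ to get
\[
\left\lceil \tfrac{n}{2} \right\rceil (n-1) \le \gamma_c(M(G)) \cdot \gamma{\tilde{}}_c(M(G)) \le (n-1)^2,
\]
which is precisely the claimed second chain.

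I do not anticipate a genuine obstacle here, since every ingredient is already available in the excerpt; the only thing that requires attention is the logical role of the hypothesis "$G$ is not a tree," which must be used (via Theorem \ref{outGisnwhenGistree}) to sharpen the generic upper bound from $n$ to $n-1$ — without it the displayed inequalities would fail (e.g. for a path, $\gamma{\tilde{}}_c(M(P_n)) = n$). One should also note in passing that the hypothesis $n \ge 4$ is needed so that Theorem \ref{outGisnwhenGistree} applies, and that a connected non-tree graph on $n \ge 4$ vertices indeed exists, so the statement is non-vacuous.
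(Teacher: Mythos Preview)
Your proof is correct and follows the same route the paper intends: combine the exact value $\gamma_c(M(G))=n-1$ from Theorem~\ref{gammacMG} with the two-sided bound on $\gamma{\tilde{}}_c(M(G))$ from Theorem~\ref{theo:mindomintree}. In fact your argument is more careful than the paper's one-line citation, since you explicitly invoke Theorem~\ref{outGisnwhenGistree} to sharpen the upper bound on $\gamma{\tilde{}}_c(M(G))$ from $n$ to $n-1$ under the non-tree hypothesis---a step the paper leaves implicit but which is genuinely needed to reach $2n-2$ and $(n-1)^2$.
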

\begin{Remark}
	The upper bounds in Corollary \ref{gammacandoutgamma} are both tight, for example when $G$ is a cycle, by Theorem \ref{theomoutdomcycle} and Theorem \ref{gammacMG}.

	Similarly, also the lower bounds in Corollary \ref{gammacandoutgamma} are tight, for example when $G$ is a complete graph $K_n$ or a wheel graph $W_n$, by Theorems \ref{theo:outmindomincomplete}, \ref{OUTERWN} and \ref{gammacMG}.
\end{Remark}
\section{Open problems}
We conclude the paper with a series of observations and open problems related to the notion of outer-connected domination number.

By Corollary \ref{gamaact<gamactildt}, if $G$ is a tree of order $n$, then
$\gamma{\tilde{}} _{c}(G) <\gamma{\tilde{}} _{c}(M(G)).$ On the other hand, by Theorems \ref{theo:outmindomincomplete}, \ref{theomoutdomcycle}, \ref{OUTERWN}, \ref{outcompletebipartitegr}, \ref{outfriendship} and \cite{ocds}, it is easy to see that
\[
1=\gamma{\tilde{}} _{c}(K_n)=\gamma{\tilde{}} _{c}(W_n)<\gamma{\tilde{}} _{c}(M(K_n))=\gamma{\tilde{}} _{c}(M(W_n))=\lceil n/2\rceil,
\]
\[1=\gamma{\tilde{}} _{c}(F_n)<\gamma{\tilde{}} _{c}(M(F_n))=n+1, 
\]
\[2=\gamma{\tilde{}} _{c}(K_{n_1, n_2})<\gamma{\tilde{}} _{c}(M(K_{n_1, n_2}))=n_2 
\]
and
\[n-2=\gamma{\tilde{}} _{c}(C_n)<\gamma{\tilde{}} _{c}(M(C_n))=n-1. 
\]
These facts all support the following conjecture.
\begin{Conjecture}
Let $G$ be a graph of order $n\ge 2$. Then
$$\gamma{\tilde{}} _{c}(G) <\gamma{\tilde{}} _{c}(M(G)).$$
\end{Conjecture}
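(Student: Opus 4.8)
The plan is to reduce everything to one construction: from a $\gamma{\tilde{}} _{c}$-set $D$ of $M(G)$ we build an outer-connected dominating set $S$ of $G$ with $|S|<|D|$, which immediately gives $\gamma{\tilde{}} _{c}(G)\le|S|<|D|=\gamma{\tilde{}} _{c}(M(G))$.

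\textbf{Reductions.} If $G$ is disconnected, a short component-wise computation shows that both $\gamma{\tilde{}} _{c}(G)$ and $\gamma{\tilde{}} _{c}(M(G))$ equal ``(order of the graph) minus (order of its largest component) plus (the same parameter evaluated on that component)'', so the disconnected case follows from the connected one; assume henceforth $G$ connected. Since $\delta(G)\ge 1$, Theorem~\ref{outGwithdelta} gives $\gamma{\tilde{}} _{c}(G)\le n-1$, so if $\gamma{\tilde{}} _{c}(M(G))=n$ we are done; by Theorem~\ref{outGisnwhenGistree} this already covers every tree, and every connected graph on $n\ge 4$ vertices that is not a tree has $\gamma{\tilde{}} _{c}(M(G))\le n-1$. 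The cases $n=2,3$ are finite and checked directly. Thus we may assume $k:=\gamma{\tilde{}} _{c}(M(G))\le n-1$.

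\textbf{The projection.} Fix a $\gamma{\tilde{}} _{c}$-set $D$ of $M(G)$ and set $A=D\cap V(G)$, $D_E=\{\,v_iv_j\mid m_{ij}\in D\,\}\subseteq E(G)$, $U=V(G)\setminus A$ and $F=E(G)\setminus D_E$; thus $k=|A|+|D_E|$, and $U\neq\emptyset$ because $k\le n-1$. Since $V(G)$ is independent in $M(G)$, unravelling the two conditions on $D$ yields two structural facts: (a) every vertex of $U$ lies on an edge of $D_E$ (it must be dominated, and its only $M(G)$-neighbours are the $m$'s); and (b) connectedness of $M(G)-D$ forces the subgraph $G[F]$ with edge set $F$ to be connected after deleting its isolated vertices, with $U$ contained in its vertex set $V_F$, so that $G[V_F]$ is connected. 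Two candidate projections suggest themselves: $S_1=A\cup\{\text{one endpoint of each edge of }D_E,\text{ preferring an endpoint in }A\}$ and $S_2=V(G)\setminus V_F\subseteq A$. By (a), $S_1$ dominates $G$, and choosing the endpoints greedily along a spanning tree of $G[V_F]$ one can make $G-S_1=G[U\setminus(\text{chosen endpoints})]$ connected whenever $G[U]$ is connected; on the other hand $G-S_2=G[V_F]$ is always connected and $|S_2|=n-|V_F|\le|A|\le k-1$, but $S_2$ need not dominate $G$.

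\textbf{Obtaining the strict inequality.} The heart of the proof is to merge these two projections into a single outer-connected dominating set of size at most $k-1$. If some edge of $D_E$ meets $A$, or two edges of $D_E$ share a vertex of $U$, then the $S_1$-construction already produces $|S_1|\le k-1$ (and in this regime $G[U]$ connected is easy to arrange), so we are reduced to the rigid situation in which $D_E$ is a perfect matching on $U$ that meets no vertex of $A$; then $|U|$ is even and $k=|A|+|U|/2$. Here I would begin from $S_1=A\cup R$, with $R$ a transversal of the matching chosen so that $G-S_1=G[U\setminus R]$ is connected (it is nonempty since $k\le n-1$), and then delete a well-chosen matched vertex $u\in R$: re-inserting $u$ keeps $G-S_1$ connected through the matching partner of $u$, and one must pick $u$ so that $u$ retains a neighbour in $(A\cup R)\setminus\{u\}$ and is not the unique $S_1$-dominator of any vertex; when no such $u$ exists, one reconnects instead through $V_F$, enlarging $S_2$ by its (small) set of non-dominated vertices. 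I expect this bookkeeping --- simultaneously controlling domination, connectivity of $G$ after deletion, and the cardinality, in the rigid matching case and especially when $G[U]$ is disconnected --- to be the genuine obstacle, and presumably the reason the conjecture is still open; a clean resolution will likely proceed via the block structure of $G[V_F]$ together with the minimality of $D$ in $M(G)$, exploiting that $M(G)$, unlike $G$, has no edges inside $V(G)$, so that a minimum such $D$ cannot be ``locally saturated'' along the matching without contradicting $|D|=k$.
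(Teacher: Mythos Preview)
The statement you are trying to prove is presented in the paper as a \emph{Conjecture} in the final section on open problems; the paper does not prove it and only offers the supporting evidence collected before it (trees, $K_n$, $W_n$, $C_n$, $K_{n_1,n_2}$, $F_n$). There is therefore no ``paper's own proof'' to compare against.

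Your proposal is not a proof either, and you are candid about this: after a clean set of reductions and the two structural observations (a) and (b), you arrive at what you call the ``rigid situation'' (when $D_E$ is a perfect matching on $U$ disjoint from $A$) and then write that you ``expect this bookkeeping\dots to be the genuine obstacle, and presumably the reason the conjecture is still open''. That sentence is an explicit admission of a gap, and indeed the gap is real. Several of the steps preceding it are also not justified: the claim that ``in this regime $G[U]$ connected is easy to arrange'' is asserted without argument (the set $U$ is determined by the fixed $\gamma{\tilde{}}_{c}$-set $D$, not something you may freely arrange), the existence of a transversal $R$ of the matching making $G[U\setminus R]$ connected is not established, and the component-wise reduction for disconnected $G$ glosses over the fact that the optimal component to ``keep'' need not simply be the largest one. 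So what you have is a plausible programme --- project a minimum outer-connected dominating set of $M(G)$ down to $G$ and save at least one vertex --- together with a correct identification of where the difficulty lies, but not a proof.
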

Similarly to the previous conjecture, it is natural to compare the outer-connected domination number of the middle graph and of the line graph.

By Corollary~\ref{LTREE<MTREE}, if $T$ is a tree, then $\gamma{\tilde{}} _{c}(L(T)) < \gamma{\tilde{}} _{c}(M(T))$. On the other hand we can obtain similar results for some known families.

\begin{Proposition}\label{LCN<MCN}
For any cycle $C_n$ of order $n \geq 3$,
 $$\gamma{\tilde{}} _{c}(L(C_n)) < \gamma{\tilde{}} _{c}(M(C_n)).$$
\end{Proposition}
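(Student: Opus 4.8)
The plan is to compute $\gamma{\tilde{}}_c(L(C_n))$ directly and compare it with the value $n-1$ given by Theorem~\ref{theomoutdomcycle}. The key observation is that the line graph of a cycle is again a cycle: $L(C_n) \cong C_n$, since each edge of $C_n$ is adjacent to exactly two other edges, forming a single $n$-cycle on the edge set. So the statement reduces to determining $\gamma{\tilde{}}_c(C_n)$, which by the reference \cite{ocds} (and as already used in the open problems section, where $n-2 = \gamma{\tilde{}}_c(C_n)$ is recorded) equals $n-2$.

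Concretely, I would first recall that $\gamma{\tilde{}}_c(C_n) = n-2$: a dominating set $S$ of $C_n$ with $C_n - S$ connected must have $C_n - S$ equal to a path (or empty), and one checks that removing two adjacent vertices from $C_n$ leaves a path on $n-2$ vertices which is connected, while the remaining $n-2$ vertices form a dominating set; minimality then follows from the fact that $\gamma(C_n) = \lceil n/3\rceil$ combined with the connectivity constraint forcing $S$ to be large. (Since this value is already quoted in the paper with citation \cite{ocds}, I would simply cite it rather than reprove it.)

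Then the proof is immediate: $\gamma{\tilde{}}_c(L(C_n)) = \gamma{\tilde{}}_c(C_n) = n-2 < n-1 = \gamma{\tilde{}}_c(M(C_n))$, where the last equality is Theorem~\ref{theomoutdomcycle}. The only point requiring any care is justifying $L(C_n) \cong C_n$, which is standard and can be stated in one line from the definition of the line graph recalled just before Corollary~\ref{LTREE<MTREE}.

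I do not anticipate a genuine obstacle here; the statement is essentially a corollary of $L(C_n)\cong C_n$ together with two already-established facts. The main thing to be careful about is the edge cases $n=3$ (where $L(C_3) = C_3$ and $\gamma{\tilde{}}_c(C_3) = 1 < 2 = \gamma{\tilde{}}_c(M(C_3))$ by Theorem~\ref{theomoutdomcycle}), which the formula $n-2 < n-1$ still covers correctly, so no separate treatment is needed.
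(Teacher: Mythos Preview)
Your proposal is correct and follows essentially the same argument as the paper: observe $L(C_n)\cong C_n$, cite \cite{ocds} for $\gamma{\tilde{}}_{c}(C_n)=n-2$, and compare with $\gamma{\tilde{}}_{c}(M(C_n))=n-1$ from Theorem~\ref{theomoutdomcycle}. The extra remarks on minimality and the $n=3$ case are fine but not needed.
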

\begin{proof}
By definition of line graph, $C_n$ is isomorphic to $L(C_n)$ for every $n \geq 3$. This implies that $\gamma{\tilde{}} _{c}(C_n)=\gamma{\tilde{}} _{c}(L(C_n))=n-2$ by \cite{ocds}. On the other hand $\gamma{\tilde{}} _{c}(M(C_n))=n-1$ by Theorem \ref{theomoutdomcycle}, and hence $\gamma{\tilde{}} _{c}(L(C_n)) < \gamma{\tilde{}} _{c}(M(C_n)).$
\end{proof}

\begin{Proposition} \label{OUTERWNLWN} For any wheel $W_n$ of order $n\geq 5$, 
	$$\gamma{\tilde{}} _{c}(L(W_n)) < \gamma{\tilde{}} _{c}(M(W_n)).$$
\end{Proposition}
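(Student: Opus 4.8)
The plan is to compute both quantities explicitly. By Theorem~\ref{OUTERWN} we already know $\gamma{\tilde{}} _{c}(M(W_n))=\lceil n/2\rceil$, so the task reduces to understanding $\gamma{\tilde{}} _{c}(L(W_n))$ and checking $\gamma{\tilde{}} _{c}(L(W_n)) < \lceil n/2\rceil$ for all $n\ge 5$. First I would set up notation as in the proof of Theorem~\ref{OUTERWN}: the hub $v_0$ and rim vertices $v_1,\dots,v_{n-1}$ with rim edges $v_iv_{i+1}$ (indices mod $n-1$) and spokes $v_0v_i$. The vertex set of $L(W_n)$ is then $\{s_1,\dots,s_{n-1}\}\cup\{r_1,\dots,r_{n-1}\}$ where $s_i$ corresponds to the spoke $v_0v_i$ and $r_i$ to the rim edge $v_iv_{i+1}$. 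The key structural observations are: the spoke-vertices $s_1,\dots,s_{n-1}$ form a clique $K_{n-1}$ (all spokes meet at $v_0$); the rim-vertices $r_1,\dots,r_{n-1}$ form a cycle $C_{n-1}$; and $r_i$ is adjacent to exactly $s_i$ and $s_{i+1}$ among the spoke-vertices.

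Next I would exhibit a small outer-connected dominating set. A natural candidate is $D=\{s_1\}$ together with a small set of rim-vertices chosen so that every $r_j$ is within distance one of $D$ and so that $L(W_n)-D$ stays connected; since the $s_i$'s form a clique and every $r_j$ sees two of them, picking $D$ to be $\{s_1\}$ plus roughly every third rim-vertex already dominates all rim-vertices, and $L(W_n)-D$ remains connected because the surviving spoke-clique $\{s_2,\dots,s_{n-1}\}$ is connected and every surviving rim-vertex attaches to it. One should be able to bound $|D|$ by something like $1+\lceil (n-1)/3\rceil$ or even better, which is strictly less than $\lceil n/2\rceil$ for $n\ge 5$; a short case check handles the smallest values $n=5,6$. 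I would then either cite a known formula for $\gamma{\tilde{}} _{c}(W_n)$ or $\gamma{\tilde{}} _{c}$ of such ``clique plus cycle'' graphs, or simply use the explicit $D$ as an upper bound — only an upper bound on $\gamma{\tilde{}} _{c}(L(W_n))$ is needed, so no lower bound argument for the line graph is required.

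The main obstacle is not conceptual but bookkeeping: one must verify both the domination condition and the connectivity of the complement $L(W_n)-D$ for the chosen $D$, and handle the parity of $n$ and the small cases $n=5,6$ separately so that the strict inequality $|D|<\lceil n/2\rceil$ genuinely holds in every case. A secondary subtlety is making sure the few removed rim-vertices do not disconnect the rim-cycle in a way that isn't rescued by the spoke-clique; since every rim-vertex is adjacent to two spoke-vertices and at most one or two spokes are removed, this is fine, but it needs to be stated carefully. Once the explicit bound $\gamma{\tilde{}} _{c}(L(W_n))\le 1+\lceil(n-1)/3\rceil<\lceil n/2\rceil$ (for $n\ge 5$) is established, combining it with Theorem~\ref{OUTERWN} gives the claim immediately.
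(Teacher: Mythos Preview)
Your proposal is correct and follows essentially the same route as the paper: both arguments reduce to exhibiting an explicit outer-connected dominating set of $L(W_n)$ of size strictly less than $\lceil n/2\rceil$ and then invoking Theorem~\ref{OUTERWN}. The only difference is the particular set chosen---the paper takes, for $n$ even, one spoke together with the rim edges $m_{34},m_{56},\dots,m_{(n-3)(n-2)}$, and for $n$ odd a collection of alternating spokes, obtaining $|D|=\lceil n/2\rceil-1$ in both cases; your construction (one spoke plus every third rim vertex) actually yields the sharper bound $|D|\le 1+\lceil(n-3)/3\rceil$, which makes the strict inequality hold for all $n\ge 5$ without needing a separate small-case check.
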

\begin{proof}
	Let $V(W_n)=\{v_0,v_1,\dots, v_{n-1}\}$ and $E(W_n)=\{v_0v_1,\dots, v_0v_{n-1}\}\cup\{v_1v_2, v_2v_3,\dots,v_{n-1}v_1\}$. Then $V(M(W_n))=V(W_n)\cup \mathcal{M}$, where $\mathcal{M}=\{ m_{0i}~|~1\leq i \leq n-1 \}\cup\{ m_{i(i+1)}~|~1\leq i \leq n-2 \}\cup\{m_{1(n-1)}\}=V(L(W_n))$ and $E(L(W_n))=\{m_{ij} m_{pq}| \{i,j\} \cap \{p,q\}=1\}$.

	
Assume that $n$ is even and consider $D=\{m_{(2i+1)(2i+2)}| 1 \leq i \leq \lceil n/2 \rceil-2 \} \cup \{m_{01}\}$. Then $D$ is an outer-connected dominating set of $L(G)$ with $|D|=\lceil n/2\rceil-1$. Similarly, if $n$ is odd, consider $D=\{m_{0(2i+1)}|1 \leq i \leq \lceil n/2 \rceil-1 \}$. Then $D$ is an outer-connected dominating set of $L(G)$ with $|D|=\lceil n/2\rceil-1$. This show that $\gamma{\tilde{}} _{c}(M(L_{n}))\leq \lceil n/2\rceil-1$.  By Theorem \ref{OUTERWN}, $\gamma{\tilde{}} _{c}(L(W_n)) \leq \lceil n/2 \rceil-1  < \lceil n/2 \rceil= \gamma{\tilde{}} _{c}(M(W_n)).$
\end{proof}
\begin{Proposition}\label{LW4=MW4}
There exists a connected graph $G$ of order $n=4$ such that
$$\gamma{\tilde{}} _{c}(L(G)) = \gamma{\tilde{}} _{c}(M(G)).$$
\end{Proposition}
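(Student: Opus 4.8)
The plan is to take $G=W_4$, the wheel on $4$ vertices, which in the notation of this paper has a hub $v_0$ and a rim triangle $v_1v_2v_3$, so that $W_4\cong K_4$. By Theorem~\ref{OUTERWN} (equivalently Theorem~\ref{theo:outmindomincomplete}) we already know $\gamma{\tilde{}}_c(M(W_4))=\lceil 4/2\rceil=2$, so the whole task reduces to showing $\gamma{\tilde{}}_c(L(W_4))=2$.

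First I would identify $L(W_4)=L(K_4)$ explicitly. The graph $K_4$ has six edges, and two of its edges fail to share a common vertex precisely when they form a perfect matching of $K_4$; there are exactly three such pairs. Hence $L(K_4)$ is obtained from $K_6$ by deleting a perfect matching, i.e. $L(W_4)$ is the octahedron $K_{2,2,2}$: a $4$-regular graph on $6$ vertices, split into three ``antipodal'' pairs, in which each vertex is adjacent to all vertices except its antipode.

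Next I would bound $\gamma{\tilde{}}_c(L(W_4))$ from below and above. For the lower bound, a single vertex $x$ dominates only $N_{L(W_4)}[x]$, which omits the antipode of $x$; so no singleton is a dominating set and $\gamma{\tilde{}}_c(L(W_4))\ge 2$. For the upper bound, let $S=\{x,x'\}$ where $x'$ is the antipode of $x$. Every vertex of $L(W_4)$ distinct from $x,x'$ is adjacent to both $x$ and $x'$, so $S$ is a dominating set; moreover $L(W_4)-S$ is the octahedron restricted to the two remaining antipodal pairs, which is $K_{2,2}=C_4$, hence connected. Thus $S$ is an outer-connected dominating set of size $2$, giving $\gamma{\tilde{}}_c(L(W_4))\le 2$. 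Combining the two bounds, $\gamma{\tilde{}}_c(L(W_4))=2=\gamma{\tilde{}}_c(M(W_4))$, as required.

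The only mildly delicate point is the structural identification of $L(W_4)$ with the octahedron together with the observation that a single vertex cannot dominate it; everything after that is immediate. Alternatively, one could skip the octahedron description altogether and simply verify $\gamma{\tilde{}}_c(L(W_4))=2$ by direct inspection of the $6$-vertex, $12$-edge graph $L(W_4)$.
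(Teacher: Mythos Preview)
Your proof is correct and follows essentially the same approach as the paper: both take $G=W_4$, invoke Theorem~\ref{OUTERWN} for $\gamma{\tilde{}}_c(M(W_4))=2$, argue that no single vertex of $L(W_4)$ dominates all of $L(W_4)$, and exhibit a two-element outer-connected dominating set consisting of two non-adjacent vertices (your antipodal pair $\{x,x'\}$ corresponds exactly to the paper's choice $\{m_{12},m_{03}\}$). Your identification of $L(K_4)$ with the octahedron $K_{2,2,2}$ is a pleasant structural packaging of the same verification, but the substance is identical.
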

\begin{proof}
Consider $G=W_4$ with $V(G)=\{v_0, v_1,v_2,v_3\}$ and $E(G)=\{v_0 v_1,v_0 v_2,v_0 v_3, v_1 v_2,v_2 v_3,v_1 v_3\}.$ Then $V(M(G))=V \cup \mathcal{M}$ where $\mathcal{M}=\{m_{ij}~|~ v_iv_j\in E(G)$ and $V(L(G))= \mathcal{M}$. Assume that $D$ is a dominating set of $L(G)$ with $|D|=1$. Then there exists an index $i$ for some $1\leq i \leq 3$ such that $N_{L(G)}[m_{ij}] \cap D=\emptyset$ which is a contradiction. This implies that $\gamma(L(G)) \geq 2$, and hence that $\gamma{\tilde{}} _{c}(L(G)) \geq 2$. Now since $D=\{m_{12}, m_{03}\}$  is an outer-connected dominating set of $L(G)$ with $|D|=2$, we have $\gamma{\tilde{}} _{c}(L(G))=2$. By Theorem \ref{OUTERWN}
$\gamma{\tilde{}} _{c}(L(G)) = \gamma{\tilde{}} _{c}(M(G))=2.$
\end{proof}
As a consequence of Proposition~\ref{LW4=MW4}, it is natural to ask the following
\begin{Problem}
Can we classify the graphs $G$ such that 
$$\gamma{\tilde{}} _{c}(L(G)) = \gamma{\tilde{}} _{c}(M(G))?$$
\end{Problem}
In addition, the previous results also all support the following conjecture.
\begin{Conjecture}
	Let $G$ be a graph of order $n\ge 2$. Then
	$$\gamma{\tilde{}} _{c}(L(G)) \le \gamma{\tilde{}} _{c}(M(G)).$$
\end{Conjecture}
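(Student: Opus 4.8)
A plan of attack. Since the vertex set of $L(G)$ is $E(G)$, identified with $\mathcal M$, the line graph $L(G)$ is exactly the subgraph of $M(G)$ induced on $\mathcal M$. The plan is therefore to start from a minimum outer-connected dominating set $D$ of $M(G)$ and to manufacture an outer-connected dominating set $D'\subseteq\mathcal M$ of $L(G)$ with $|D'|\le|D|$. The one structural fact driving everything is that for every original vertex $v\in V(G)$ the set $N_{M(G)}(v)$ consists precisely of the edges of $G$ incident with $v$, and any two such edges share $v$, hence are adjacent in $L(G)\subseteq M(G)$; so $N_{M(G)}(v)$ induces a clique. Consequently, if $H$ is a connected induced subgraph of $M(G)$ and $v\in V(H)\cap V(G)$, then $H-v$ is again connected (a path through $v$ uses two neighbours of $v$, which are adjacent, and can be short-cut). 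Iterating this by deleting, one at a time, all original vertices of $M(G)-D$, we get: for \emph{any} outer-connected dominating set $D$ of $M(G)$, the subgraph of $L(G)$ induced on $\mathcal M\setminus D$ is connected.

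So connectivity of the complement is automatic for the ``honest'' part $D\cap\mathcal M$; the only thing that can fail for $D':=D\cap\mathcal M$ is domination of $L(G)$. Here minimality of $D$ is used. If $v\in D\cap V(G)$, then $D\setminus\{v\}$ is not an outer-connected dominating set, and since adding $v$ back to $M(G)-D$ cannot destroy connectivity (clique neighbourhood), it must be that either every edge incident with $v$ already lies in $D$, or $v$ has a \emph{private} edge-neighbour $m_{vw_v}\notin D$ with $N_{L(G)}[m_{vw_v}]\cap D=\emptyset$ and $w_v\notin D$; moreover these private edges are distinct for distinct $v$. This suggests the candidate
\[
D':=(D\cap\mathcal M)\cup\{\,m_{vw_v}\mid v\in D\cap V(G)\text{ has no incident edge in }D\,\},
\]
where for a $v$ that does have an incident edge in $D$ we simply reuse that edge (which already belongs to $D\cap\mathcal M$). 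A short case analysis gives $|D'|\le|D|$, and it shows that $D'$ dominates $L(G)$: an undominated $m_{kl}\notin D'$ would have to be dominated in $M(G)$ only by some $v_i\in D\cap V(G)$ with $i\in\{k,l\}$, but then $m_{v_iw_{v_i}}\in D'$ shares the vertex $v_i$ with $m_{kl}$, a contradiction. Isolated vertices of $G$ (which are isolated in $M(G)$) and the trivial cases $G=K_2$ or $|E(G)|\le1$ are dealt with separately, using the convention that the empty graph is connected together with Theorem~\ref{theo:outmindomintreeneqstar} and the bound $\gamma{\tilde{}}_c(L(G))\le|E(G)|$.

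The main obstacle. What remains — and where the conjecture really bites — is showing that $L(G)-D'$ is still connected, i.e. that deleting the chosen private edges $m_{vw_v}$ from the connected graph $L(G)[\mathcal M\setminus D]$ does not disconnect it. This is genuinely delicate: in $L(G)[\mathcal M\setminus D]$ a private edge $m_{vw_v}$ is adjacent only to the remaining edges at $v$ (a clique) and the remaining edges at $w_v$ (a clique), and a path through $m_{vw_v}$ from an edge at $v$ to an edge at $w_v$ need not be short-cuttable, so $m_{vw_v}$ may be a cut vertex, and worse, the private edges may be \emph{jointly} separating. The approach would be to choose the private representatives carefully — ordering the swaps and re-selecting $w_v$ at each stage so that no chosen vertex becomes a cut vertex of the current graph — and, when forced, to retain for some $v$ its whole incident-edge clique minus one vertex while balancing the extra cost against $|D\cap\mathcal M|$. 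I expect this joint-non-separation step to be the real difficulty, which is presumably why at present only the special families (trees, $C_n$, $W_n$, $K_n$, and the sporadic $W_4$) have been settled.
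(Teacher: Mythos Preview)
The statement you are addressing is stated in the paper as a \emph{conjecture}, not a theorem; the paper offers no proof and lists it among the open problems, supported only by the special families treated earlier (trees via Corollary~\ref{LTREE<MTREE}, cycles via Proposition~\ref{LCN<MCN}, wheels via Proposition~\ref{OUTERWNLWN}, and the sporadic equality at $W_4$). So there is no argument in the paper for your attempt to be compared against.

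Your proposal is a reasonable line of attack, and the parts you assert are correct: the clique-neighbourhood observation does show that $L(G)[\mathcal{M}\setminus D]$ is connected whenever $D$ is an outer-connected dominating set of $M(G)$; the minimality dichotomy for $v\in D\cap V(G)$ (either every edge incident with $v$ lies in $D$, or $v$ has a private edge-neighbour $m_{vw_v}$ with $N_{L(G)}[m_{vw_v}]\cap D=\emptyset$ and $w_v\notin D$) is valid; the resulting $D'$ dominates $L(G)$; and the private edges are pairwise distinct, giving $|D'|\le|D|$. But the step you yourself flag --- connectivity of $L(G)-D'$ --- is a genuine gap, not a technicality to be filled in routinely. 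Concretely, a private edge $m_{vw_v}$ lies at the interface of two cliques in $L(G)[\mathcal{M}\setminus D]$ (the surviving edges at $v$ and the surviving edges at $w_v$), and nothing in your construction prevents it from being the unique bridge between the two sides; worse, since $N_{L(G)}[m_{vw_v}]\cap D=\emptyset$, none of its $L(G)$-neighbours belong to $D\cap\mathcal M$, so you cannot fall back on an edge of $D'$ to reconnect them. Your proposed remedies (re-selecting $w_v$ in some order, or absorbing an entire incident clique and hoping to balance the count) are heuristic and come with no mechanism guaranteeing that the joint deletion of all private edges avoids separation. As it stands, your proposal reproduces the state of affairs in the paper: the conjecture remains open.
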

\paragraph{\textbf{Acknowledgements}} During the preparation of this article the fourth author was supported by JSPS Grant-in-Aid for Early-Career Scientists (19K14493).


\end{document}